\theoremstyle{plain}\newtheorem{definition}{Definition}[section]
\theoremstyle{definition}\newtheorem{theorem}{Theorem}[section]
\theoremstyle{plain}\newtheorem{lemma}[theorem]{Lemma}
\theoremstyle{plain}\newtheorem{coro}[theorem]{Corollary}
\theoremstyle{plain}
\theoremstyle{remark}\newtheorem{remark}{Remark}[section]
\newcommand{\norm}[1]{\left\|#1\right\|}
\newcommand{\Div}{\mathrm{div}\,}
\newcommand{\B}{\Big}
\newcommand{\be}{\begin{equation}}
\newcommand{\ee}{\end{equation}}
 \newcommand{\ba}{\begin{aligned}}
 \newcommand{\ea}{\end{aligned}}
\newcommand{\fbxoo}{\int_{B_{\varepsilon}(0)}\!\!\!\!\!\!\!\!\!\!\!\!\!\!\!\! -~\,~\,~\,~\,}
\newcommand{\fbxo}{\int_{B_{\varepsilon}(x)}\!\!\!\!\!\!\!\!\!\!\!\!\!\!\!\! -~\,\,~\,~\,}
  \newcommand{\f}{\frac}
  \newcommand{\ben}{\begin{enumerate}}
   \newcommand{\een}{\end{enumerate}}
\newcommand{\Rmnum}[1]{\expandafter\@slowromancap\romannumeral #1@}
\numberwithin{equation}{section}
\begin{document}
\title{On two conserved quantities  in the inviscid  electron and Hall
magnetohydrodynamic equations}
\author{Yanqing Wang\footnote{   College of Mathematics and   Information Science, Zhengzhou University of Light Industry, Zhengzhou, Henan  450002,  P. R. China Email: wangyanqing20056@gmail.com}   ~
 \, Jing Yang\footnote{College of Mathematics and   Information Science, Zhengzhou University of Light Industry, Zhengzhou, Henan  450002,  P. R. China     Email: cqyj513@outlook.com }~~ and~  Yulin Ye\footnote{Corresponding author. School of Mathematics and Statistics,
Henan University,
Kaifeng, 475004,
P. R. China. Email: ylye@vip.henu.edu.cn}~   }
\date{}
\maketitle
\begin{abstract}
In this paper, we are concerned with the energy and magnetic helicity conservation of weak solutions for both  the  electron  and Hall
magnetohydrodynamic equations. Various   sufficient criteria  to ensure the energy and magnetic helicity conservation  in Onsager's critical spaces $\underline{B}^{\alpha}_{p,VMO}$ and $B^{\alpha}_{p,c(\mathbb{N})}$  in these  systems are established. Moreover, for the E-MHD equations, we  observe that  the conservation criteria of energy and magnetic helicity  to  the E-MHD equations    correspond   to
the helicity and energy to the ideal incompressible Euler equations, respectively.
 \end{abstract}
\noindent {\bf MSC(2020):}\quad 76W05,    76X05, 35D30, 76B03, 35Q35\\\noindent
{\bf Keywords:}  electron
magnetohydrodynamic equations; Hall
magnetohydrodynamic equations; energy;  magnetic helicity; weak solutions  
\section{Introduction}
\label{intro}
\setcounter{section}{1}\setcounter{equation}{0}

The magnetohydrodynamic (MHD) equations,
the electron (E-MHD) and Hall (H-MHD)
magnetohydrodynamic equations play an central role in the theory of plasma physical (see e.g. \cite{[KCY],[Galtier],[Chkhetiani]}).
The inviscid   H-MHD  equations may be written as follows
\be\left\{\ba\label{hallMHD}
&u_{t}+u\cdot\nabla u-h\cdot\nabla h+\nabla\Pi =0, \\
&h_{t}+u\cdot\nabla h-h\cdot\nabla u+\text{d}_{\text{I}}\nabla\times\B[(\nabla\times h)\times h\B]=0, \\
&\Div u=\Div h=0.
 \ea\right.\ee
 Here $u$ represents the velocity field,   $h$ stands for the magnetic field and $\Pi=\pi +\f12 |h|^2$ denotes the magnetic pressure with $\pi$ being the fluid pressure, respectively.
We hereafter denote $\vec{j}=\text{curl}h$ as the electric current.
  Without loss of generality, we  set the iron inertial length $\text{d}_{\text{I}}=1$.  In the present paper, we consider the case of a bounded domain with periodic boundary condition in $\mathbb{R}^d$, namely $\Omega=\mathbb{T}^d$ with $d= 2,3$.
 Formally,  when the velocity $u$ in the  H-MHD  equations \eqref{hallMHD} is zero, this system  becomes the following    E-MHD equations
\be\left\{\ba\label{E-MHD}
&h_{t}+ \nabla\times\B[(\nabla\times h)\times h\B]=0, \\
 & \Div h=0.
 \ea\right.\ee
For the derivation of the E-MHD system and its background, we refer the readers to \cite{[KCY]}.
Moreover, if ignoring the Hall effect, namely letting $\text{d}_{I}=0$, then the Hall-MHD equations \eqref{hallMHD}   reduce to the  standard incompressible MHD equations
\be\left\{\ba\label{MHD}
&u_{t}+u\cdot\nabla u-h\cdot\nabla h+\nabla\Pi =0, \\
&h_{t}+u\cdot\nabla h-h\cdot\nabla u=0, \\
&\Div u=\Div h=0.
 \ea\right.\ee
 As known to all, in physics,  the common conserved quantities in these three systems are the total energy  and magnetic helicity (see \cite{[Galtier],[Chkhetiani]}).
Precisely,
  the   energy   in the E-MHD system \eqref{E-MHD}  and H-MHD (MHD) equations \eqref{hallMHD}  for smooth solutions are
$\f12\int_{\mathbb{T}^d} |h|^{2}dx$ and $\f12\int_{\mathbb{T}^d}|v|^{2}+|h|^{2}dx.$
 The   magnetic helicity  in these system is $\int_{\mathbb{T}^d}  H\cdot {\rm curl\,} H\  dx,
$ where $ H={\rm curl}^{-1}h$ represents the magnetic vector potential.

The mathematical study of  conserved quantities is very close to the Onsager's conjecture on the energy conservation of weak solutions for the ideal Euler equations in \cite{[Onsager]}. The initial Onsager's conjecture is that the critical regularity for weak solutions conserving the energy in the Euler equations   is 1/3 in H\"older spaces.
As the initial Onsager's conjecture for the Euler equations, two parts of the analogous Onsager's conjecture for the usual MHD system \eqref{MHD} are well studied.  For the positive part, the sufficient conditions of the weak solutions implying the energy  and magnetic helicity conservation for the standard MHD system \eqref{MHD}  can be found in \cite{[Yu],[KL],[FLS],[CKS],[WHYL]}. Precisely, a  weak solution $(u,h)$ of the ideal MHD equations \eqref{MHD}  satisfies the total energy conservation if one of the following conditions holds
\begin{itemize}
 \item Caflisch-Klapper-Steele \cite{[CKS]}:
$u\in C(0,T;B_{3,\infty}^{\alpha_{1}}),h\in C(0,T;B_{3,\infty}^{\alpha_{2}}),  \alpha_{1}>1/3,  \alpha_{1}+2\alpha_{2}>1;$
 \item  Kang and Lee \cite{[KL]}: $u\in L^{3}(0,T;B_{3,c(\mathbb{N})}^{\alpha_{1}}),h\in L^{3}(0,T;B_{3,c(\mathbb{N})}^{\alpha_{2}}), \alpha_{1}\geq1/3, \alpha_{1}+2\alpha_{2}\geq1;$
 \item Yu \cite{[Yu]}: $u\in L^{3}(0,T;B_{3,c(\mathbb{N})}^{\alpha_{1}}),h\in L^{3}(0,T;B_{3,\infty}^{\alpha_{2}}), \alpha_{1}\geq1/3, \alpha_{1}+2\alpha_{2}\geq1;$
  \item Wang-Huang-Ye-Liu  \cite{[WHYL]}:  $u\in L^{3} (0,T;\underline{B}^{\alpha}_{3,VMO}
  )\cap L^{p_{1}} (0,T;\underline{B}^{\alpha}_{q_{1},VMO}), h\in L^{p_{2}} (0,T; \dot{B}^{\beta}_{q_{2},\infty})$,
  $\alpha \geq1/3, \alpha +2\beta\geq1,1/q_{1} +2/q_{2}=1,1/p_{1} +2/p_{2}=1,1/p_{2}+1/q_{1}\leq1.$
   \end{itemize}
The magnetic helicity  of  weak solutions for the ideal MHD equations \eqref{MHD} is invariant if
\begin{itemize}
 \item
Caflisch-Klapper-Steele \cite{[CKS]}: $u\in C(0,T;B_{3,\infty}^{\alpha_{1}}),h\in C(0,T;B_{3,\infty}^{\alpha_{2}}), \alpha_{1}+2\alpha_{2}>0;$
 \item Kang and Lee \cite{[KL]}:
$u,h\in L^{3}([0,T];L^{3})$;
 \item Faraco-Lindberg-Sz\'ekelyhidi \cite{[FLS]}: $h \in L^{p_{1}}((0,T) \times\mathbb{T}^d ), h\times   u
\in L^{p_{2}}((0,T) \times\mathbb{T}^d ),$  $1/p_{1}+1/p_{2}=1$;
 \item   Wang-Huang-Ye-Liu \cite{[WHYL]}:
 $u\in L^{p_{1}}(0,T;L^{q_{1}}(\mathbb{T}^d))$ and $h\in L^{p_{2}}(0,T;L^{q_{2}}(\mathbb{T}^d))$, $1\leq p_{1},p_{2},q_{1},q_{2}<\infty$ and $ 1/p_{1}+ 2/p_{2}=1$,$ 1/q_{1}+ 2/q_{2}=1$.
   \end{itemize}
   It is worth pointing out that, for $\alpha >1/3$,  the inclusion relations of the aforementioned Besov spaces are that
\be\label{includ}
C^\alpha \subseteq B^\alpha_{3,\infty}\subseteq B^{\frac{1}{3}}_{3,c(\mathbb{N})}\subseteq \underline{B}^{\frac{1}{3}}_{3,VMO}\subseteq B^{\frac{1}{3}}_{3,\infty},
\ee
where $B^{\frac{1}{3}}_{3,c(\mathbb{N})} $ and $\underline{B}^{\frac{1}{3}}_{3,VMO}
$ are Onsager's critical spaces which were initially introduced by    Cheskidov-Constantin-Friedlander-Shvydkoy in \cite{[CCFS]} and Fjordholm-Wiedemann in \cite{[FW]}, respectively.
Regarding the negative part of Onsager's conjecture, the readers can be referred to \cite{[LZZ],[LZZ1],[MY],[BBV],[CKS],[FLS],[DF]} for the non-uniqueness theory of weak solutions for  both the incompressible ideal and viscous MHD equations.

Now, we turn our attentions back to the ideal E-MHD and H-MHD equations.
Compared with nonlinear term  in terms of convection type of both  the ideal Euler and the standard  MHD equations,  the Hall term $\nabla\times\B[(\nabla\times h)\times h\B]$ in    the E-MHD and H-MHD equations
involves the second order derivative rather than    the first order derivative.
To the knowledge of authors,
there are only two works for positive part of
Onsager type results in the inviscid   E-MHD and H-MHD equations.
Precisely, in  \cite{[KDB]},
Kang-Deng-Bie showed that if
\be\label{KDB}
u,h\in L^{p}\cap B^{\alpha}_{p,\infty}((0,T)\times \mathbb{T}^{3}), ~\vec{j}=\text{curl}h\in B^{\beta}_{p,\infty}((0,T)\times \mathbb{T}^{3}),\f2p+\f1q=1,2\alpha+\beta>1,\ee
 then the total energy  of weak solutions for the  ideal H-MHD equations \eqref{hallMHD} is conserved.
On the other hand,  Dai-Krol-Liu  in \cite{[DKL]} showed that the weak solutions for the ideal E-MHD equations \eqref{E-MHD} preserve
 the energy   if  $h\in L^{3}(0,T;B_{3,c(\mathbb{N})}^{\f23})$ and
magnetic helicity   if
\be\label{dkl0}
h\in L^{3}(0,T;B_{3,c(\mathbb{N})}^{\f13}),\ee
respectively.
The intention of this paper is to establish new energy and
magnetic helicity  conservation criterion       of weak solutions for both the inviscid  E-MHD and
 H-MHD equations.
\begin{theorem}\label{the1.1} Let $d=2,3$ and $h\in C([0,T];L^{2}(\mathbb{T}^d))$ be a weak solution of the incompressible  E-MHD equations  \eqref{E-MHD}.  Then for any $t\in [0,T]$, the  energy  $\f12\int_{\mathbb{T}^d}|h|^{2}dx $ is invariant  provided that one of the following four conditions is satisfied
 \begin{enumerate}[(1)]
 \item $h\in L^{3}(0,T;\underline{B}^{\f23}_{3,VMO}(\mathbb{T}^d))$;
 \item $\vec{j}\in L^{r_{1}}(0,T;\underline{B}^{\alpha}_{p,VMO}(\mathbb{T}^d)),h\in L^{r_{2}}(0,T;B^{\beta}_{q,\infty}(\mathbb{T}^d)),$$\alpha+2\beta\geq1,\f1{r_{1}}+
     \f{2}{r_{2}}=1,\f1p+
     \f2q=1$;
\item $\vec{j}\in L^{r_{1}}(0,T;B^{\alpha}_{p,\infty}(\mathbb{T}^d)),h\in L^{r_{2}}(0,T;\underline{B}^{\beta}_{q,VMO}(\mathbb{T}^d)),$$\alpha+2\beta\geq1,\f1{r_{1}}+
\f{2}{r_{2}}=1,\f1p+
     \f2q=1$;
      \item   $\vec{j}\in L^{3}(0,T;B^{\f13}_{\f{3d}{d+2},c(\mathbb{N})}(\mathbb{T}^d)).$

   \end{enumerate}
\end{theorem}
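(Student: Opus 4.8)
The plan is to run the Constantin--E--Titi mollification scheme, exploiting the special algebraic structure of the Hall term. Writing $h^\varepsilon=h*\rho_\varepsilon$ for a standard spatial mollification and $\vec{j}^{\,\varepsilon}=(\nabla\times h)^\varepsilon=\nabla\times h^\varepsilon$, and using the identity $(\nabla\times v)\times v=(v\cdot\nabla)v-\tfrac12\nabla|v|^2$ together with the vanishing of the curl of a gradient, one has $\nabla\times[(\nabla\times h)\times h]=\nabla\times[(h\cdot\nabla)h]=\nabla\times\nabla\cdot(h\otimes h)$, so the mollified equation reads $\partial_t h^\varepsilon+\nabla\times(\vec{j}\times h)^\varepsilon=0$ with $h^\varepsilon$ smooth in space and $C^1$ in time. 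Testing against $h^\varepsilon$ and integrating by parts on $\mathbb{T}^d$ gives, for every $t\in[0,T]$,
\[
\tfrac12\|h^\varepsilon(t)\|_{L^2}^2-\tfrac12\|h^\varepsilon(0)\|_{L^2}^2=-\int_0^t\Pi_\varepsilon(s)\,ds,\qquad \Pi_\varepsilon:=\int_{\mathbb{T}^d}(\vec{j}\times h)^\varepsilon\cdot\vec{j}^{\,\varepsilon}\,dx=-\int_{\mathbb{T}^d}(h\otimes h)^\varepsilon:\nabla\otimes\vec{j}^{\,\varepsilon}\,dx .
\]
Since $h\in C([0,T];L^2(\mathbb{T}^d))$ the left side tends to $\tfrac12\|h(t)\|_{L^2}^2-\tfrac12\|h(0)\|_{L^2}^2$, so the theorem reduces to proving $\int_0^t\Pi_\varepsilon\to0$ as $\varepsilon\to0$ under each hypothesis.

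First I would symmetrize: since $\Div h^\varepsilon=0$ and $\Div\vec{j}^{\,\varepsilon}=0$, one has $\int_{\mathbb{T}^d}(h^\varepsilon\otimes h^\varepsilon):\nabla\otimes\vec{j}^{\,\varepsilon}\,dx=-\int_{\mathbb{T}^d}(h^\varepsilon\cdot\nabla h^\varepsilon)\cdot\vec{j}^{\,\varepsilon}\,dx=0$ (equivalently $(\vec{j}^{\,\varepsilon}\times h^\varepsilon)\cdot\vec{j}^{\,\varepsilon}=0$ pointwise), so in $\Pi_\varepsilon$ one may replace $(h\otimes h)^\varepsilon$ by the commutator $(h\otimes h)^\varepsilon-h^\varepsilon\otimes h^\varepsilon$, resp.\ $(\vec{j}\times h)^\varepsilon$ by $(\vec{j}\times h)^\varepsilon-\vec{j}^{\,\varepsilon}\times h^\varepsilon$. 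Then the Constantin--E--Titi identity
\[
(f\otimes g)^\varepsilon-f^\varepsilon\otimes g^\varepsilon=\int\rho_\varepsilon(y)\,\delta_yf\otimes\delta_yg\,dy-(f-f^\varepsilon)\otimes(g-g^\varepsilon),\qquad \delta_yf(x):=f(x-y)-f(x),
\]
(and identically with $\otimes$ replaced by $\times$) expresses $\Pi_\varepsilon$ purely through increments of $h$ and of $\vec{j}$.

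For (1) I would bound $|\Pi_\varepsilon|$ by Hölder with exponents $(3,3,3)$,
\[
|\Pi_\varepsilon|\lesssim\Big(\int\rho_\varepsilon(y)\|\delta_yh\|_{L^3}^2\,dy+\|h-h^\varepsilon\|_{L^3}^2\Big)\,\|\nabla\otimes\vec{j}^{\,\varepsilon}\|_{L^3},
\]
and insert $\|\nabla\otimes\vec{j}^{\,\varepsilon}\|_{L^3}=\|\nabla\otimes(\nabla\times h^\varepsilon)\|_{L^3}\lesssim\varepsilon^{2/3-2}\|h\|_{B^{2/3}_{3,\infty}}$ together with $\|\delta_yh\|_{L^3}\lesssim|y|^{2/3}\|h\|_{B^{2/3}_{3,\infty}}$ and $\|h-h^\varepsilon\|_{L^3}\lesssim\varepsilon^{2/3}\|h\|_{B^{2/3}_{3,\infty}}$; the powers cancel, $\varepsilon^{4/3}\cdot\varepsilon^{-4/3}=1$, which is exactly the Onsager threshold $3\cdot\tfrac23=2$, so a priori $|\Pi_\varepsilon(s)|\lesssim\|h(s)\|_{B^{2/3}_{3,\infty}}^3\in L^1_t$. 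The point of the $\underline{B}^{2/3}_{3,VMO}$ hypothesis is that, by its very definition, it upgrades this to $o(1)$: $\|h-h^\varepsilon\|_{L^3}=o(\varepsilon^{2/3})$ and $\int\rho_\varepsilon(y)\|\delta_yh\|_{L^3}^2\,dy=o(\varepsilon^{4/3})$, whence $\Pi_\varepsilon(s)\to0$ for a.e.\ $s$ and dominated convergence gives $\int_0^t\Pi_\varepsilon\to0$. I would not treat (4) separately: the Biot--Savart representation $h=\nabla\times(-\Delta)^{-1}\vec{j}$ is an order $-1$ Fourier multiplier, so $\vec{j}\in L^3(0,T;B^{1/3}_{3d/(d+2),c(\mathbb{N})})$ forces $h\in L^3(0,T;B^{4/3}_{3d/(d+2),c(\mathbb{N})})$, and the (scaling-invariant, hence $c(\mathbb{N})$-preserving) embedding $B^{4/3}_{3d/(d+2),c(\mathbb{N})}\hookrightarrow B^{2/3}_{3,c(\mathbb{N})}\subseteq\underline{B}^{2/3}_{3,VMO}$, cf.\ \eqref{includ}, reduces (4) to (1).

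For (2) and (3) there is direct control on $\vec{j}$, so I would keep $\Pi_\varepsilon=\int_{\mathbb{T}^d}[(\vec{j}\times h)^\varepsilon-\vec{j}^{\,\varepsilon}\times h^\varepsilon]\cdot\vec{j}^{\,\varepsilon}\,dx$ and, after the bilinear commutator, use Hölder in space with exponents $(p,q,q)$ — admissible precisely because $\tfrac1p+\tfrac2q=1$ — with $\|\delta_yh\|_{L^q},\|h-h^\varepsilon\|_{L^q}\lesssim|y|^{\beta}\|h\|_{B^\beta_{q,\infty}}$ (resp.\ $\varepsilon^\beta\|h\|_{B^\beta_{q,\infty}}$), $\|\vec{j}^{\,\varepsilon}\|_{L^q}=\|\nabla\times h^\varepsilon\|_{L^q}\lesssim\varepsilon^{\beta-1}\|h\|_{B^\beta_{q,\infty}}$, and $\|\delta_y\vec{j}\|_{L^p},\|\vec{j}-\vec{j}^{\,\varepsilon}\|_{L^p}\lesssim|y|^\alpha\|\vec{j}\|_{B^\alpha_{p,\infty}}$; the net power of $\varepsilon$ is $\alpha+2\beta-1\ge0$, so $\Pi_\varepsilon$ is bounded, and the $\underline{B}^\alpha_{p,VMO}$-membership of $\vec{j}$ in (2) (resp.\ the $\underline{B}^\beta_{q,VMO}$-membership of $h$ in (3)) turns the corresponding factor into a genuine $o(\cdot)$, giving $\Pi_\varepsilon(s)\to0$ a.e.; Hölder in time with $\tfrac1{r_1}+\tfrac2{r_2}=1$ keeps $\int_0^t\Pi_\varepsilon$ dominated by an $L^1_t$ function and dominated convergence closes the argument. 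The main obstacle throughout is the second-order derivative carried by the Hall term: in the single-space cases (1) and (4) the flux $\Pi_\varepsilon$ carries two loose derivatives and no second Besov norm to absorb them, and closing at the critical exponent is possible only via the divergence-form rewriting $\nabla\times[(\nabla\times h)\times h]=\nabla\times\nabla\cdot(h\otimes h)$, which after one integration by parts spreads the two derivatives symmetrically as $\tfrac23+\tfrac23+\tfrac23=2$; the second delicate point is arranging the $VMO$/$c(\mathbb{N})$ bookkeeping so that the resulting $O(1)$ bounds become $o(1)$ at equality.
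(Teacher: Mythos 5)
Your treatment of cases (1)--(3) is essentially identical to the paper's: the same divergence-form rewriting $\nabla\times[(\nabla\times h)\times h]=\nabla\times\mathrm{div}(h\otimes h)$, the same cancellation $\int(\vec{j}^{\,\varepsilon}\times h^{\varepsilon})\cdot\vec{j}^{\,\varepsilon}=0$ reducing the flux to a Constantin--E--Titi commutator, the same H\"older splittings ($3,3,3$ in case (1); $(p,q,q)$ and $(r_1,r_2,r_2)$ in cases (2)--(3)), and the same use of the $VMO$ refinement to turn the critical $O(1)$ bound into $o(1)$; the paper packages these steps as Lemmas \ref{lem2.1} and \ref{lem2.3}. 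Where you genuinely diverge is case (4). The paper keeps the flux in the form $\int[(h\otimes h)^{\varepsilon}-h^{\varepsilon}\otimes h^{\varepsilon}]\,\nabla\vec{j}^{\,\varepsilon}$ and invokes a commutator estimate (Lemma \ref{lem2.5}) tailored to hypotheses on $\nabla f$ and $\nabla g$, which absorbs the $\frac{2}{d}$ loss of integrability via Sobolev embedding inside the commutator; you instead reduce (4) to (1) by Biot--Savart plus the critical Besov embedding $B^{4/3}_{3d/(d+2)}\hookrightarrow B^{2/3}_{3}$ (both order-preserving on dyadic blocks, hence $c(\mathbb{N})$-preserving), landing in $B^{2/3}_{3,c(\mathbb{N})}\subseteq\underline{B}^{2/3}_{3,VMO}$. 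Your route is shorter and avoids a separate commutator lemma, at the cost of leaning on Littlewood--Paley machinery: note that the intermediate space $B^{4/3}_{3d/(d+2),c(\mathbb{N})}$ must be understood in the dyadic sense (the paper's first-difference seminorm is vacuous for smoothness index above $1$), so it is cleaner to bypass it and argue blockwise, $2^{2j/3}\|\Delta_j h\|_{L^3}\lesssim 2^{4j/3}\|\Delta_j h\|_{L^{3d/(d+2)}}\sim 2^{j/3}\|\Delta_j \vec{j}\|_{L^{3d/(d+2)}}\to 0$, with the low frequencies controlled by $h\in C([0,T];L^2)$. With that caveat the reduction is sound and both approaches close the argument.
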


\begin{remark}
According to inclusion relations \eqref{includ} for Besov spaces, we know that this theorem improves the corresponding results in  \cite{[DKL]}.
\end{remark}
\begin{remark} Since there holds
 $B^{\gamma}_{\ell,c(\mathbb{N})}\subseteq \underline{B}^{\gamma}_{\ell,VMO}$, for any $\gamma \in (0,1) $ and $\ell \in [1,\infty]$, the results in (2) and (3)  are still valid by replacing $\underline{B}^{\gamma}_{\ell,VMO}$  by $B^{\gamma}_{\ell,c(\mathbb{N})}$.
\end{remark}
\begin{remark}
It seems that the energy conservation criteria for the E-MHD equations     is by the analogy of
the helicity conservation roles of the ideal  Euler equations due to Cheskidov-Constantin-Friedlander-Shvydkoy \cite{[CCFS]}.
And the results (2)-(4) are partial motived for the investigation of the helicity  of the ideal Euler equations by Chae in \cite{[Chae1],[Chae]} and by De Rosa \cite{[De Rosa]}.
\end{remark}
Before we state the result concerning the magnetic helicity for E-MHD equations \eqref{E-MHD}, we recall that the magnetic vector potential equations  take the form below
\be\label{1.8}
H_{t}+(\nabla\times h)\times h+\nabla p=0, ~\text{div} H=0,
\ee
where $H={\rm curl}^{-1}h$.
\begin{theorem}\label{the1.2} Let $d=3$ and $ \vec{j} \in C(0,T;L^{\f32}(\mathbb{T}^{3}))$
  be a  weak solution of  the inviscid  E-MHD equations  \eqref{E-MHD}. If $h\in L^{3}(0,T;\underline{B}^{\f13}_{3,VMO}(\mathbb{T}^3)),$
 then for any $t\in [0,T]$, the magnetic   helicity   is invariant, i.e.
 $$\int_{\mathbb{T}^3}  H\cdot {\rm curl\,} H\  dx=\int_{\mathbb{T}^3}  H_0\cdot {\rm curl\,} H_0\  dx,$$
 where $H={\rm curl}^{-1}h$.
\end{theorem}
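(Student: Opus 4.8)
The strategy is a Constantin--E--Titi type mollification argument applied to the magnetic vector potential equation \eqref{1.8}, exploiting the fact that the magnetic helicity flux of \eqref{E-MHD} coincides with the Onsager energy flux of the ideal Euler equations, with $h$ playing the role of the velocity.

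First I would fix a standard spatial mollifier $\rho_\varepsilon$, write $f^\varepsilon=f\ast\rho_\varepsilon$, and record the function-space facts. Since $h\in C([0,T];L^{2}(\mathbb T^{3}))$, the field $H={\rm curl}^{-1}h$, taken in the mean-zero gauge so that $\int_{\mathbb T^{3}}H\cdot{\rm curl}\,H\,dx=\int_{\mathbb T^{3}}H\cdot h\,dx$ is well defined and gauge independent (as $\Div h=0$), satisfies $H\in C([0,T];H^{1}(\mathbb T^{3}))$; hence $t\mapsto\int_{\mathbb T^{3}}H\cdot h\,dx$ is continuous, and $H^\varepsilon\to H$, $h^\varepsilon\to h$ in $C([0,T];L^{2})$, so $\int_{\mathbb T^{3}}H^\varepsilon(t)\cdot h^\varepsilon(t)\,dx\to\int_{\mathbb T^{3}}H(t)\cdot h(t)\,dx$ for every $t\in[0,T]$. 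Because $d=3$, $\vec j\in C(0,T;L^{3/2})$ and $h\in L^{3}(0,T;L^{3})$ (using \eqref{includ} and the standard embedding $B^{1/3}_{3,\infty}\hookrightarrow L^{3}$), the nonlinearity $(\nabla\times h)\times h$ and the pressure $p$ solving $\Delta p=-\Div[(\nabla\times h)\times h]$ are genuine functions in $L^{3}(0,T;L^{1})$; consequently $H$ is a weak solution of \eqref{1.8}, and mollifying that weak formulation gives, for a.e.\ $t$,
$$H^\varepsilon_t+\big((\nabla\times h)\times h\big)^\varepsilon+\nabla p^\varepsilon=0,\qquad \Div H^\varepsilon=0,$$
in the classical sense, with $h^\varepsilon=\nabla\times H^\varepsilon$ since mollification commutes with $\nabla\times$.

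Next I would compute the flux. Differentiating $\int_{\mathbb T^{3}}H^\varepsilon\cdot h^\varepsilon\,dx$ in time — legitimate since $H^\varepsilon$ and $h^\varepsilon=\nabla\times H^\varepsilon$ are absolutely continuous in $t$ with values in smooth fields — and using the self-adjointness of $\nabla\times$ on $\mathbb T^{3}$, namely $\int_{\mathbb T^{3}}H^\varepsilon_t\cdot h^\varepsilon\,dx=\int_{\mathbb T^{3}}(\nabla\times H^\varepsilon_t)\cdot H^\varepsilon\,dx=\int_{\mathbb T^{3}}h^\varepsilon_t\cdot H^\varepsilon\,dx$, one obtains
$$\frac{d}{dt}\int_{\mathbb T^{3}}H^\varepsilon\cdot h^\varepsilon\,dx=2\int_{\mathbb T^{3}}H^\varepsilon_t\cdot h^\varepsilon\,dx=-2\int_{\mathbb T^{3}}\big((\nabla\times h)\times h\big)^\varepsilon\cdot h^\varepsilon\,dx,$$
the pressure term dropping out against $\Div h^\varepsilon=0$. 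Invoking the identity $(\nabla\times h)\times h=\Div(h\otimes h)-\tfrac12\nabla|h|^{2}$ (valid since $\Div h=0$), the vanishing of $\int_{\mathbb T^{3}}\nabla(|h|^{2})^\varepsilon\cdot h^\varepsilon\,dx$ and of the diagonal term $\int_{\mathbb T^{3}}h^\varepsilon_jh^\varepsilon_k\partial_j h^\varepsilon_k\,dx$ (both by one integration by parts and $\Div h^\varepsilon=0$), this reduces to the Euler energy flux
$$\int_{\mathbb T^{3}}\big((\nabla\times h)\times h\big)^\varepsilon\cdot h^\varepsilon\,dx=-\int_{\mathbb T^{3}}\big[(h_jh_k)^\varepsilon-h^\varepsilon_jh^\varepsilon_k\big]\,\partial_j h^\varepsilon_k\,dx.$$

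Finally I would integrate over $[0,t]$ and send $\varepsilon\to0$, which reduces the whole theorem to
$$\lim_{\varepsilon\to0}\int_{0}^{t}\!\!\int_{\mathbb T^{3}}\big[(h_jh_k)^\varepsilon-h^\varepsilon_jh^\varepsilon_k\big]\,\partial_j h^\varepsilon_k\,dx\,ds=0,\qquad h\in L^{3}(0,T;\underline B^{1/3}_{3,VMO}),$$
and this is where the real difficulty lies. By the Constantin--E--Titi commutator decomposition the integrand is controlled, up to a remainder, by $\|\nabla h^\varepsilon\|_{L^{3}_x}\|h-h^\varepsilon\|_{L^{3}_x}^{2}$, whose two factors scale like $\varepsilon^{-2/3}$ and $\varepsilon^{2/3}$, so a bare Hölder estimate yields only an $O(1)$ bound — this is exactly Onsager criticality. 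The point is to exploit the vanishing-mean-oscillation structure built into $\underline B^{1/3}_{3,VMO}$, via the argument of Cheskidov--Constantin--Friedlander--Shvydkoy \cite{[CCFS]} (the same VMO device used for the criteria in Theorem \ref{the1.1}), to upgrade this to $o(1)$; the time exponent $L^{3}_t$ is precisely what makes $\|\nabla h^\varepsilon\|_{L^{3}_x}\|h-h^\varepsilon\|_{L^{3}_x}^{2}$ integrable in $t$, via $\tfrac13+\tfrac23=1$. Once this flux vanishes, passing to the limit gives $\int_{\mathbb T^{3}}H(t)\cdot h(t)\,dx=\int_{\mathbb T^{3}}H_0\cdot h_0\,dx$, which is the assertion. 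The single genuine obstacle is thus the Onsager-critical commutator estimate in the $\underline B^{1/3}_{3,VMO}$ scale; everything else is time-regularity bookkeeping, and the hypothesis $\vec j\in C(0,T;L^{3/2})$ enters only there — to give meaning to the nonlinearity and the pressure and to justify the differentiation identity.
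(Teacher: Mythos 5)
Your proposal is correct and follows essentially the same route as the paper: mollify the magnetic potential equation, use the self-adjointness of ${\rm curl}$ to reduce $\frac{d}{dt}\int_{\mathbb{T}^3}H^\varepsilon\cdot h^\varepsilon\,dx$ to the Euler-type commutator flux $2\int_{\mathbb{T}^3}\big[(h\otimes h)^{\varepsilon}-h^{\varepsilon}\otimes h^{\varepsilon}\big]:\nabla h^{\varepsilon}\,dx$, and close by pairing the $\underline{B}^{1/3}_{3,VMO}$ commutator estimate (the paper's Lemma \ref{lem2.3}, giving $o(\varepsilon^{2/3})$ in $L^{3/2}L^{3/2}$) against $\|\nabla h^{\varepsilon}\|_{L^{3}L^{3}}=o(\varepsilon^{-2/3})$ from Lemma \ref{lem2.1}. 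The only cosmetic difference is that the paper writes the potential equation as $H_t+\mathrm{div}(h\otimes h)+\nabla p=0$ from the outset, so the gradient term $\tfrac12\nabla|h|^2$ you handle separately is absorbed into the pressure and never appears.
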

\begin{remark}
Owing to
  inclusion relations   \eqref{includ}, we see that this theorem generalizes the sufficient condition \eqref{dkl0} for the conservation of  magnetic   helicity.
\end{remark}
\begin{remark}
It seems that the magnetic helicity conservation criteria for the E-MHD equations corresponds  to
the energy conservation roles of the ideal Euler equations Cheskidov-Constantin-Friedlander-Shvydkoy \cite{[CCFS]}.
\end{remark}
Next, we formulate our results on inviscid Hall-MHD equations \eqref{hallMHD} as follows.
\begin{theorem}\label{energythe1.1}
Let $d=2,3$ and the pair $(u,h)\in C([0,T];L^{2}(\mathbb{T}^d))$ be a weak solution of the   inviscid Hall-MHD equations  \eqref{hallMHD}. Then the total energy  $\f12\int_{\mathbb{T}^d}(|u|^{2}+|h|^{2})dx $  is invariant  provided that $$u\in L^{3}(0,T;\underline{B}^{\f13}_{3,VMO}(\mathbb{T}^d))\ \text{and}\ h\in L^{3}(0,T;\underline{B}^{\f23}_{3,VMO}(\mathbb{T}^d)). $$
\end{theorem}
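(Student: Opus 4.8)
The plan is to run the Constantin--E--Titi mollification scheme on the coupled system, reducing the energy conservation to the decay of three space-time flux integrals and recognizing the Hall flux as precisely the one already controlled in the proof of Theorem~\ref{the1.1}(1). Concretely, fix a standard symmetric mollifier $\rho_\varepsilon$ on $\mathbb{T}^d$, write $f^\varepsilon=f*\rho_\varepsilon$ and $\vec{j}^{\,\varepsilon}=\nabla\times h^\varepsilon$. Mollifying \eqref{hallMHD}, testing the first equation against $u^\varepsilon$, the second against $h^\varepsilon$, integrating over $\mathbb{T}^d\times[t_1,t_2]$ and adding, the pressure drops out since $\Div u^\varepsilon=0$ and
\begin{equation*}
\f12\int_{\mathbb{T}^d}\big(|u^\varepsilon(t_2)|^2+|h^\varepsilon(t_2)|^2-|u^\varepsilon(t_1)|^2-|h^\varepsilon(t_1)|^2\big)\,dx
=\int_{t_1}^{t_2}\!\!\int_{\mathbb{T}^d}\big(\mathcal{E}_\varepsilon+\mathcal{M}_\varepsilon+\mathcal{H}_\varepsilon\big)\,dx\,dt ,
\end{equation*}
where $\mathcal{E}_\varepsilon$ collects the Euler self-interaction from $u\cdot\nabla u$, $\mathcal{M}_\varepsilon$ the three couplings from $-h\cdot\nabla h$, $u\cdot\nabla h$, $-h\cdot\nabla u$, and $\mathcal{H}_\varepsilon$ the Hall contribution from $\nabla\times[(\nabla\times h)\times h]$. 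Since $u,h\in C([0,T];L^2)$ gives $u^\varepsilon\to u$, $h^\varepsilon\to h$ in $C([0,T];L^2)$, the left side converges to the energy increment and it suffices to show each of the three space-time integrals tends to $0$ as $\varepsilon\to0$.

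\textbf{Euler and MHD fluxes.} Writing $r_\varepsilon(f,g):=(f\otimes g)^\varepsilon-f^\varepsilon\otimes g^\varepsilon$ and integrating by parts with $\Div u=\Div h=0$, the non-commutator remainders become total derivatives that vanish by $\Div u^\varepsilon=\Div h^\varepsilon=0$, leaving
\begin{equation*}
\int_{\mathbb{T}^d}\mathcal{E}_\varepsilon=-\int_{\mathbb{T}^d}r_\varepsilon(u,u):\nabla u^\varepsilon,\qquad
\int_{\mathbb{T}^d}\mathcal{M}_\varepsilon=-\int_{\mathbb{T}^d}\big(r_\varepsilon(h,h):\nabla u^\varepsilon-r_\varepsilon(u,h):\nabla h^\varepsilon+r_\varepsilon(h,u):\nabla h^\varepsilon\big).
\end{equation*}
With $\|r_\varepsilon(f,g)\|_{L^{3/2}}\lesssim\varepsilon^{a+b}\|f\|_{B^{a}_{3,\infty}}\|g\|_{B^{b}_{3,\infty}}$ and $\|\nabla f^\varepsilon\|_{L^{3}}\lesssim\varepsilon^{s-1}\|f\|_{B^{s}_{3,\infty}}$, plus H\"older in time (using $\f13+\f23=1$), the three MHD fluxes are $O(\varepsilon^{4/3}\varepsilon^{-2/3})$ and $O(\varepsilon^{1}\varepsilon^{-1/3})$, i.e.\ $O(\varepsilon^{2/3})$, hence vanish in the limit already for $u\in L^{3}_tB^{1/3}_{3,\infty}$, $h\in L^{3}_tB^{2/3}_{3,\infty}$. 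The Euler flux is borderline, $\|r_\varepsilon(u,u)\|_{L^{3/2}}\|\nabla u^\varepsilon\|_{L^3}\lesssim\varepsilon^{2/3}\varepsilon^{-2/3}=O(1)$; here $u\in L^{3}_t\underline{B}^{1/3}_{3,VMO}$ furnishes the vanishing modulus that upgrades this to $o(1)$, so dominated convergence in time gives $\int_{t_1}^{t_2}\!\int\mathcal{E}_\varepsilon\to0$, exactly as in \cite{[CCFS],[FW],[WHYL]}.

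\textbf{Hall flux.} Since mollification commutes with $\nabla\times$ and $(\nabla\times h)\times h=(h\cdot\nabla)h-\nabla(\f12|h|^2)$, while $\Div\vec{j}^{\,\varepsilon}=0$, integration by parts gives
\begin{equation*}
\int_{\mathbb{T}^d}\mathcal{H}_\varepsilon\,dx=-\int_{\mathbb{T}^d}\big[(\nabla\times h)\times h\big]^\varepsilon\cdot\vec{j}^{\,\varepsilon}\,dx=\int_{\mathbb{T}^d}r_\varepsilon(h,h):\nabla\vec{j}^{\,\varepsilon}\,dx,
\end{equation*}
the non-commutator remainder $-\int(h^\varepsilon\otimes h^\varepsilon):\nabla\vec{j}^{\,\varepsilon}=-\int[(h^\varepsilon\cdot\nabla)h^\varepsilon]\cdot\vec{j}^{\,\varepsilon}$ vanishing by $(h^\varepsilon\cdot\nabla)h^\varepsilon=\nabla(\f12|h^\varepsilon|^2)-h^\varepsilon\times\vec{j}^{\,\varepsilon}$, $\Div\vec{j}^{\,\varepsilon}=0$ and $(h^\varepsilon\times\vec{j}^{\,\varepsilon})\cdot\vec{j}^{\,\varepsilon}=0$ (for $d=2$ the computation is identical with the scalar current). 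Since $\|\nabla\vec{j}^{\,\varepsilon}\|_{L^3}\lesssim\|\nabla^2h^\varepsilon\|_{L^3}\lesssim\varepsilon^{-4/3}\|h\|_{B^{2/3}_{3,\infty}}$ and $\|r_\varepsilon(h,h)\|_{L^{3/2}}\lesssim\varepsilon^{4/3}\|h\|_{B^{2/3}_{3,\infty}}^2$, the Hall flux is again borderline $O(1)$ --- this is exactly the flux controlled in the proof of Theorem~\ref{the1.1}(1) --- and $h\in L^{3}_t\underline{B}^{2/3}_{3,VMO}$ supplies the extra smallness yielding $\int_{t_1}^{t_2}\!\int\mathcal{H}_\varepsilon\to0$. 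Combined with the previous step this shows that $\f12\int_{\mathbb{T}^d}(|u|^2+|h|^2)\,dx$ is constant on $[0,T]$.

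\textbf{Main obstacle.} Everything funnels into the two borderline pieces --- the Euler flux at regularity $1/3$ and the Hall flux at regularity $2/3$ --- and the Hall flux is where the real work sits: the second-order Hall operator costs two derivatives, so no positive power of $\varepsilon$ may be sacrificed, and one must organize the integrations by parts so that the only surviving object is the genuine commutator $r_\varepsilon(h,h)$ against $\nabla\vec{j}^{\,\varepsilon}$ --- never a difference of $\vec{j}$, which has negative regularity --- the ``diagonal'' pieces being forced to cancel \emph{exactly} (not merely to be small) through $(h\cdot\nabla)h=\nabla(\f12|h|^2)-h\times\vec{j}$ and $(h^\varepsilon\times\vec{j}^{\,\varepsilon})\cdot\vec{j}^{\,\varepsilon}=0$. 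One should also check that the weak formulation is meaningful, i.e.\ that $(\nabla\times h)\times h\in B^{-1/3}_{3,\infty}\cdot B^{2/3}_{3,\infty}$ is a well-defined distribution because $-\f13+\f23>0$ (a paraproduct estimate). The remaining terms are handled by the classical Constantin--E--Titi scheme and the MHD computations of \cite{[CKS],[KL],[Yu],[WHYL]}, and the $\mathrm{VMO}$-type hypotheses enter only at the final borderline step, via the $o(1)$-refinement of the commutator estimate familiar from Onsager's conjecture.
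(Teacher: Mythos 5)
Your proposal is correct and follows essentially the same route as the paper: mollify, test with $u^\varepsilon$ and $h^\varepsilon$, reduce every flux to a Constantin--E--Titi commutator paired with one derivative of a mollified field (two derivatives for the Hall term), cancel the diagonal Hall piece exactly via $(h^\varepsilon\times\vec{j}^{\,\varepsilon})\cdot\vec{j}^{\,\varepsilon}=0$, and invoke the VMO refinement of the commutator estimate at the two borderline fluxes. The only cosmetic difference is that you group the three magnetic convection terms together and observe they are subcritical ($O(\varepsilon^{2/3})$) already in $B_{3,\infty}$, whereas the paper tracks them individually and cancels the two diagonal remainders $II_2$ against the one arising in $IV$; the substance is identical.
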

Similar as \eqref{1.8}, an analogue of  the magnetic vector potential equations     for \eqref{hallMHD} can be  given by
\be\label{1.10}
H_{t}+h\times u+(\nabla\times h)\times h+\nabla p=0, ~\text{div} H=0,
\ee
where $H={\rm curl}^{-1} h$. Then the sufficient conditions for the conservation of magnetic helicity  involving \eqref{hallMHD}  and \eqref{1.10} are stated as follows.
\begin{theorem}\label{mhthe1.3} Let    $d=3$ and $ h\in C([0,T];L^3(\mathbb{T}^3))$ be a  weak solution of  the inviscid Hall-MHD equations \eqref{hallMHD}. If
  $u\in L^{3} (0,T;L^{3}(\mathbb{T}^{3}))$ and $ h\in L^{3}(0,T;\underline{B}^{\f13}_{3,VMO}(\mathbb{T}^3)),$
 then for any $t\in [0,T]$, the magnetic   helicity  is constant, i.e.
  $$\int_{\mathbb{T}^3}  H\cdot {\rm curl\,} H\  dx=\int_{\mathbb{T}^3}  H_0\cdot {\rm curl\,} H_0\  dx,$$
 where $H={\rm curl}^{-1}h$.
\end{theorem}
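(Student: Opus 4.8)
The plan is to run the Constantin--E--Titi mollification scheme on the magnetic vector potential formulation \eqref{1.10}; the only genuinely new feature is the extra spatial derivative carried by the Hall term $(\nabla\times h)\times h$, which is precisely why the borderline Onsager space $\underline{B}^{1/3}_{3,VMO}$ appears (for the usual MHD system, where this term is absent, the weaker hypothesis $h\in L^3_tL^3_x$ would do in place of the Besov one; cf.\ the list in the introduction). Write $H={\rm curl}^{-1}h$, so the magnetic helicity equals $\int_{\mathbb{T}^3}H\cdot h\,dx$; since $h\in C([0,T];L^3)$ and ${\rm curl}^{-1}$ is a Fourier multiplier of order $-1$, we have $H\in C([0,T];W^{1,3})$, hence the helicity is continuous in $t$ and it suffices to show its distributional time derivative vanishes. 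Fix a standard symmetric mollifier $\rho_\varepsilon$, put $h^\varepsilon=h*\rho_\varepsilon$, $H^\varepsilon=H*\rho_\varepsilon$, so that $h^\varepsilon={\rm curl}\,H^\varepsilon$ and $\Div h^\varepsilon=0$. Mollifying \eqref{1.10} (legitimate since after mollification every term in \eqref{1.10} is a smooth function of $x$), pairing with $h^\varepsilon$, and using $\int_{\mathbb{T}^3}H^\varepsilon\cdot\partial_t h^\varepsilon\,dx=\int_{\mathbb{T}^3}{\rm curl}\,H^\varepsilon\cdot\partial_t H^\varepsilon\,dx=\int_{\mathbb{T}^3}h^\varepsilon\cdot\partial_t H^\varepsilon\,dx$, one obtains
\be\label{pp-main}
\frac{d}{dt}\int_{\mathbb{T}^3}H^\varepsilon\cdot h^\varepsilon\,dx=-2\int_{\mathbb{T}^3}h^\varepsilon\cdot(h\times u)^\varepsilon\,dx-2\int_{\mathbb{T}^3}h^\varepsilon\cdot\B[(\nabla\times h)\times h\B]^\varepsilon\,dx,
\ee
the pressure term vanishing because $\Div h^\varepsilon=0$.

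For the first term in \eqref{pp-main}, write $(h\times u)^\varepsilon=h^\varepsilon\times u^\varepsilon+\B[(h\times u)^\varepsilon-h^\varepsilon\times u^\varepsilon\B]$; the $h^\varepsilon\times u^\varepsilon$ piece contributes nothing since $h^\varepsilon\cdot(h^\varepsilon\times u^\varepsilon)=0$ pointwise, while $(h\times u)^\varepsilon-h^\varepsilon\times u^\varepsilon\to0$ in $L^{3/2}((0,T)\times\mathbb{T}^3)$, because $(fg)^\varepsilon-f^\varepsilon g^\varepsilon\to0$ in $L^{3/2}$ whenever $f,g\in L^3$ (apply this with $f=h$, $g=u\in L^3_tL^3_x$, the time-domination being $\|h(t)\|_{L^3}\|u(t)\|_{L^3}\in L^{3/2}(0,T)$). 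Pairing against $h^\varepsilon$, which is bounded in $L^3_tL^3_x$, makes this term tend to $0$.

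The second term of \eqref{pp-main} is the crux. By the identity $(\nabla\times h)\times h=\Div(h\otimes h)-\f12\nabla|h|^{2}$ and $\Div h^\varepsilon=0$ (which kills the gradient), an integration by parts on $\mathbb{T}^3$ rewrites it as $2\int_{\mathbb{T}^3}\nabla h^\varepsilon:(h\otimes h)^\varepsilon\,dx$; subtracting $2\int_{\mathbb{T}^3}\nabla h^\varepsilon:(h^\varepsilon\otimes h^\varepsilon)\,dx=\int_{\mathbb{T}^3}h^\varepsilon\cdot\nabla|h^\varepsilon|^{2}\,dx=0$ reduces it to the quadratic Constantin--E--Titi commutator tested against one derivative of $h^\varepsilon$,
\be\label{pp-flux}
2\int_{\mathbb{T}^3}\nabla h^\varepsilon:\B[(h\otimes h)^\varepsilon-h^\varepsilon\otimes h^\varepsilon\B]\,dx.
\ee
Here $\|\nabla h^\varepsilon\|_{L^3}\lesssim\varepsilon^{-2/3}\|h\|_{B^{1/3}_{3,\infty}}$ and $\|(h\otimes h)^\varepsilon-h^\varepsilon\otimes h^\varepsilon\|_{L^{3/2}}\lesssim\varepsilon^{2/3}\|h\|_{B^{1/3}_{3,\infty}}^{2}$, so the integral in \eqref{pp-flux} is a priori bounded by $C\|h(t)\|_{B^{1/3}_{3,\infty}}^{3}$, which lies in $L^{1}(0,T)$ since $h\in L^3(0,T;B^{1/3}_{3,\infty})$. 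The refinement carried by the critical space $\underline{B}^{1/3}_{3,VMO}$ — where the vanishing-mean-oscillation structure of $\delta_y h/|y|^{1/3}$ as $y\to0$ (equivalently the $c(\mathbb{N})$-type condition on the Littlewood--Paley blocks) upgrades this $O(1)$ bound into genuine decay, following the Cheskidov--Constantin--Friedlander--Shvydkoy mechanism \cite{[CCFS]} — then forces the integral in \eqref{pp-flux}, for a.e.\ $t$, to tend to $0$; with the $L^{1}(0,T)$ domination, dominated convergence gives convergence to $0$ in $L^1(0,T)$. I expect this to be the main obstacle: one must verify that the derivative loss $\varepsilon^{-2/3}$ is exactly compensated by the $\varepsilon^{2/3}$ gain of the commutator, that the borderline $VMO$ (or $c(\mathbb{N})$) structure really yields strict decay rather than a mere bound, and that this holds uniformly enough in $t$ to survive the time integration.

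Putting the two estimates together, the right-hand side of \eqref{pp-main} tends to $0$ in $L^{1}(0,T)$, while $\int_{\mathbb{T}^3}H^\varepsilon\cdot h^\varepsilon\,dx\to\int_{\mathbb{T}^3}H\cdot h\,dx$ uniformly on $[0,T]$ because $h^\varepsilon\to h$ in $C([0,T];L^{3})$ and $H^\varepsilon\to H$ in $C([0,T];L^{3})$ (the latter from $H\in C([0,T];W^{1,3})$). Passing to the limit in the time-integrated version of \eqref{pp-main} gives $\int_{\mathbb{T}^3}H\cdot{\rm curl}\,H\,dx=\int_{\mathbb{T}^3}H_0\cdot{\rm curl}\,H_0\,dx$ for every $t\in[0,T]$, which is the assertion. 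The vector-identity manipulations, the integrations by parts, and the limit passage are all routine; only the control of \eqref{pp-flux} genuinely uses the full strength of the hypothesis $h\in L^3(0,T;\underline B^{1/3}_{3,VMO})$.
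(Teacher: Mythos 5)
Your proposal is correct and follows essentially the same route as the paper: mollify the potential formulation, cancel the $h^\varepsilon\cdot(h^\varepsilon\times u^\varepsilon)$ term, reduce the Hall contribution to the Constantin--E--Titi commutator $\int\nabla h^\varepsilon:[(h\otimes h)^\varepsilon-h^\varepsilon\otimes h^\varepsilon]$, and balance the $\varepsilon^{-2/3}$ derivative loss against the $o(\varepsilon^{2/3})$ commutator gain from $\underline B^{1/3}_{3,VMO}$ (the paper's Lemmas \ref{lem2.1}, \ref{lem2.3} and \ref{lem2.4} supply exactly these facts). The only cosmetic difference is that the paper applies the commutator estimate directly in the space-time norms $L^{3/2}(0,T;L^{3/2})$ versus $L^{3}(0,T;L^{3})$ (its Besov--VMO space is defined by a space-time limit), rather than arguing pointwise in $t$ and invoking dominated convergence as you sketch; both packagings yield the required $L^1(0,T)$ convergence of the flux to zero.
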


\begin{remark}
It seems that this is the first result for weak solutions keeping magnetic   helicity   for  the inviscid  Hall-MHD equations \eqref{hallMHD}.
\end{remark}
Next, we are concerned with the energy and  the generalized magnetic   helicity in the   viscous E-MHD equations and the viscous H-MHD equations. Indeed, for the following viscous   E-MHD equations
\be\left\{\ba\label{visE-MHD}
&h_{t}+ \nabla\times\B[(\nabla\times h)\times h\B]-\Delta h=0, \\
& \Div h=0,
\ea\right.\ee
in \cite{[DKL]},  Dai-Krol-Liu  proved that for any $\phi(x,t)\in \mathcal{D}([0,T]\times \mathbb{R}^3)$ and $t\in (0,T]$,
the following generalized magnetic   helicity equation holds \be\label{mhi}\ba
&\int_{\mathbb{R}^{3}\times \{t\}} h\cdot H \phi dx +2\int^{t}_{0}\int_{\mathbb{R}^{3}} \nabla H :\nabla h \phi dxds\\
=&\int_{\mathbb{R}^{3}\times \{0\}} h\cdot H \phi dx+\int^{t}_{0}\int_{\mathbb{R}^{3}} h\cdot(\phi_t +\Delta \phi)dxds+\int^{t}_{0}\int_{\mathbb{R}^{3}}((\nabla \times h)\times h)\cdot (\nabla \phi \times H) dxds,\ea\ee
if
\be\label{dkl3}
h\in L^{3}(0,T;L^{\f92}(\mathbb{R}^{3})) ~~\text{and}~~  \vec{j}\in L^{3}(0,T;L^{\f95}(\mathbb{R}^{3}))\cap L^{3}(0,T;L^{\f{18}{5}}(\mathbb{R}^{3})) \ \text{ outside a}\  C^{\f12}\ \text{curve}.
\ee
The main difference between the inviscid and viscous equations is that the second one
obeys the additional regularity  $L^{2}(0,T;H^{1}(\mathbb{T}^{3}))$.
Partially inspired by the recent work \cite{[WY]},
as a consequence  result of Theorem \ref{the1.1} and   Theorem \ref{the1.2},
one can get the    energy and   magnetic helicity equality criterion  for the viscous E-MHD equations \eqref{visE-MHD} as follows.
\begin{coro}\label{coro1.6}Let $h   \in L^{\infty}(0,T;L^{2}(\mathbb{T}^d))\cap L^{2}(0,T;H^{1}(\mathbb{T}^d))$ be a  weak solutions of \eqref{visE-MHD}. \begin{enumerate}[1.]
		\item
		Then the   energy  $\f12\int_{\mathbb{T}^d} |h|^{2}dx $  is invariant  provided that $$   \vec{j}\in L^{r_1}(0,T;L^p(\mathbb{T}^d))\ \text{and}\  h\in L^{r_2}(0,T;L^q(\mathbb{T}^d)), \f{2}{r_1}+\f{1}{r_2}=1, \f{2}{p}+\f{1}{q}=1. $$
		\item Let $d=3$, then the following  magnetic   helicity equality holds that
		$$\int_{\mathbb{T}^3}h\cdot H(x,T) dx-\int_{\mathbb{T}^3} h\cdot H(x,0) dx +2\int_0^T\int_{\mathbb{T}^3}(\nabla \times h)\cdot h dxdt=0,$$
		provided that one of the following conditions is satisfied
	\begin{enumerate}[(1)]
		\item  $\vec{j}\in L^{p}(0,T;L^{q}(\mathbb{T}^{3})),h\in L^{\f{2p}{p-1}}(0,T;L^{\f{2q}{q-1}}(\mathbb{T}^{3})),$
		\item$h\in L^{4}(0,T;L^{4}(\mathbb{T}^{3}));$
		\item$h\in L^{p}(0,T;L^{q}(\mathbb{T}^{3}))	\ \text{with}\ \f{1}{p}+
		\f{3}{q}=1, \ \text{if}\ 3<q< 4;$ or $\f{2}{p}+
		\f{2}{q}=1,  \ \text{if}\  q\geq 4;$
		\item   $\vec{j}\in L^{p}(0,T;L^{q}(\mathbb{T}^{3}))
		\ \text{with}\ \frac{1}{p}+\frac{3 }{q}=2,  \ \text{if}\ \frac{3 }{2}<q<\frac{9}{5};$
	 or $\frac{1}{p}+\frac{6}{5 q}=1,    \ \text{if}\  q\geq \frac{9}{5}.$
	\end{enumerate}
\end{enumerate}
\end{coro}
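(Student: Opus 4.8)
The plan is to follow the mollification scheme that underlies the proofs of Theorems~\ref{the1.1} and~\ref{the1.2}, the only genuinely new ingredient being the dissipative term $-\Delta h$ in \eqref{visE-MHD}: once the equation is mollified and tested, it produces exactly the dissipation integrals in the statement, which are finite and stable in the limit thanks to $h\in L^2(0,T;H^1)$. Write $\vec j=\nabla\times h$ and let $(\cdot)_\varepsilon$ denote periodic mollification with an even kernel, so that it commutes with $\nabla\times$, $\Div$ and $\mathrm{curl}^{-1}$. Mollifying \eqref{visE-MHD} and pairing with $h_\varepsilon$ gives, for the energy,
\[
\tfrac12\|h_\varepsilon(t)\|_2^2+\int_0^t\|\nabla h_\varepsilon\|_2^2\,ds=\tfrac12\|h_\varepsilon(0)\|_2^2-\int_0^t\!\!\int_{\mathbb{T}^d}\big[(\vec j\times h)_\varepsilon-\vec j_\varepsilon\times h_\varepsilon\big]\cdot\vec j_\varepsilon\,dx\,ds,
\]
where $(\vec j_\varepsilon\times h_\varepsilon)\cdot\vec j_\varepsilon\equiv0$ was used to write the Hall contribution as a commutator; pairing instead with $H_\varepsilon=\mathrm{curl}^{-1}h_\varepsilon$ (for $d=3$), using $\int\partial_t h_\varepsilon\cdot H_\varepsilon=\tfrac12\tfrac{d}{dt}\int h_\varepsilon\cdot H_\varepsilon$, $-\int\Delta h_\varepsilon\cdot H_\varepsilon=\int(\nabla\times h_\varepsilon)\cdot h_\varepsilon$ and $(\vec j_\varepsilon\times h_\varepsilon)\cdot h_\varepsilon\equiv0$, gives
\[
\int_{\mathbb{T}^3}h_\varepsilon\cdot H_\varepsilon\,dx\,\Big|_0^T+2\int_0^T\!\!\int_{\mathbb{T}^3}(\nabla\times h_\varepsilon)\cdot h_\varepsilon\,dx\,dt=-2\int_0^T\!\!\int_{\mathbb{T}^3}\big[(\vec j\times h)_\varepsilon-\vec j_\varepsilon\times h_\varepsilon\big]\cdot h_\varepsilon\,dx\,dt.
\]

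In both identities only the Hall commutator on the right needs estimating, and this is handled by the Constantin--E--Titi decomposition $(\vec j\times h)_\varepsilon-\vec j_\varepsilon\times h_\varepsilon=r_\varepsilon(\vec j,h)-(\vec j-\vec j_\varepsilon)\times(h-h_\varepsilon)$ together with H\"older's inequality: in the energy identity the integrand carries two factors of $\vec j$ and one of $h$, matching the space scaling $\tfrac2p+\tfrac1q=1$ and the time scaling $\tfrac2{r_1}+\tfrac1{r_2}=1$; in the helicity identity it carries one factor of $\vec j$ and two of $h$, matching $\tfrac1q+\tfrac1{q'}=1$ and $\tfrac1p+\tfrac1{p'}=1$ with $p'=\tfrac p{p-1}$, $q'=\tfrac q{q-1}$, which is precisely hypothesis~2(1). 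Since $\|\vec j-\vec j_\varepsilon\|_{L^p_x}\to0$, $\|h-h_\varepsilon\|_{L^q_x}\to0$ and the translation moduli appearing in $r_\varepsilon$ tend to $0$ pointwise in $t$ (all relevant Lebesgue exponents being finite), while the remaining factors provide an $L^1_t$ bound via the time-scaling, dominated convergence forces the commutator to $0$. Passing to the limit in the first display uses $h\in C([0,T];L^2)$ (which the equation and $h\in L^\infty L^2\cap L^2H^1$ supply) and $h\in L^2H^1$; in the second it uses, besides $h\in L^2H^1$, the weak-in-time continuity of $h$ and of $H=\mathrm{curl}^{-1}h\in L^\infty H^1\hookrightarrow L^\infty L^6$ inherited from \eqref{visE-MHD} to handle the endpoint term. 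This proves part~1 and part~2 under hypothesis~(1).

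It remains to reduce hypotheses 2(2), 2(3), 2(4) to 2(1) by exploiting the parabolic regularity. For 2(2) this is immediate: $h\in L^2H^1$ forces $\vec j\in L^2_tL^2_x$, and together with $h\in L^4_tL^4_x$ this is 2(1) with $p=q=2$. For 2(3), interpolating $h\in L^p_tL^q_x$ with $h\in L^\infty_tL^2_x$ (when $q\ge4$) or with $h\in L^2_tL^6_x$, the $3$D Sobolev consequence of $h\in L^2H^1$ (when $3<q<4$), produces $h\in L^4_tL^4_x$ exactly when the stated relation between $p$ and $q$ holds, so one is back in case 2(2). For 2(4), the Calder\'on--Zygmund and Sobolev bound $h=\mathrm{curl}^{-1}\vec j\in L^p_tW^{1,q}_x\hookrightarrow L^p_tL^{3q/(3-q)}_x$ (valid for $1<q<3$), interpolated with $h\in L^\infty_tL^2_x$ (when $q\ge\tfrac95$) or with $h\in L^2_tL^6_x$ (when $\tfrac32<q<\tfrac95$), yields $h\in L^{2p'}_tL^{2q'}_x$, which paired with $\vec j\in L^p_tL^q_x$ is again 2(1); the two ranges of $q$ are precisely those for which the spatial target exponent $2q'$ lies on the relevant interpolation segment, the induced relation on $(p,q)$ being the one in the statement. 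I expect the main obstacle to be exactly this bookkeeping --- keeping the interpolation parameters in $(0,1)$ and checking that the embeddings apply on the stated $q$-ranges --- together with the routine but necessary justification that the helicity endpoints at $t=0,T$ are attained continuously, for which one reads off $\partial_t H\in L^2_tL^{3/2}_x$ from \eqref{visE-MHD} and upgrades the time-continuity of $h$ accordingly.
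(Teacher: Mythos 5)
Your proposal is correct, and its skeleton (mollify, test with $h_\varepsilon$ resp.\ $H_\varepsilon$, isolate the Hall contribution as a commutator, then reduce hypotheses 2(2)--2(4) to 2(1) by interpolation/Sobolev/Calder\'on--Zygmund) is the same as the paper's. The one localized difference is in part 2(1): you keep the Hall term in the form $(\vec j\times h)$ and pair the commutator $(\vec j\times h)^\varepsilon-\vec j^{\,\varepsilon}\times h^\varepsilon$ with the \emph{undifferentiated} $h^\varepsilon$, so that only the plain $L^p$-commutator convergence (Lemma \ref{lem2.4}) is needed, the duality being $\frac{p+1}{2p}+\frac{p-1}{2p}=1$ in time and likewise in space. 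The paper instead rewrites the Hall term via $\mathrm{div}(h\otimes h)$, integrates by parts onto $\nabla h^\varepsilon$, and must then balance the $O(\varepsilon)$ gain of the $W^{1,q}$-weighted commutator bound (Lemma \ref{lem2.7}, using $\nabla h=\mathrm{curl}^{-1}$-regularity from $\vec j\in L^pL^q$) against the $O(\varepsilon^{-1})$ loss of $\|\nabla h^\varepsilon\|$ (Lemma \ref{lem2.6}). Both routes are valid; yours is slightly more economical for case 2(1) since it bypasses the $\varepsilon$/$\varepsilon^{-1}$ cancellation entirely, while the paper's divergence-form computation has the advantage of reusing verbatim the identity already derived for the inviscid helicity theorem. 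For the energy part and for the reductions 2(2)--2(4) (interpolation with $L^\infty L^2$ and $L^2L^6$, and Gagliardo--Nirenberg/Sobolev from $\vec j\in L^pL^q$) your argument coincides with the paper's up to presentation; the exponent bookkeeping you flag does check out on the stated ranges.
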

\begin{remark}
	Notice that \eqref{dkl3} is a special case of this theorem.
\end{remark}
\begin{remark}
	This corollary extends the famous Lion-Shinbrot type energy balance   criteria of the 3D Navier-Stokes equations to the magnetic helicity conservation of the 3D viscous E-MHD equations  \eqref{visE-MHD}.
\end{remark}
\begin{remark}
Inspired by \cite{[WMH]}, it should be pointed out that this corollary still holds for the whole spaces $\mathbb{R}^{3}$.
\end{remark}
Moreover,  Dumas and   Sueur in  \cite{[DS]} showed that   the weak solutions of
  the viscous H-MHD equations below
 \be\left\{\ba\label{vhallMHD}
&u_{t}+u\cdot\nabla u-h\cdot\nabla h+\nabla\Pi -\Delta u=0, \\
&h_{t}+u\cdot\nabla h-h\cdot\nabla u+ \nabla\times\B[(\nabla\times h)\times h\B]-\Delta h=0, \\
&\Div u=\Div h=0,
 \ea\right.\ee
  conserve
 the energy   if  \be
 u\in L^{3}(0,T;B_{3,c(\mathbb{N})}^{\f13}) ~~\text{and}~~ h\in L^{3}(0,T;B_{3,c(\mathbb{N})}^{\f23}),\ee
  and preserve the
magnetic helicity   if
\be\label{dsl0}
h\in L^{3}(0,T;B_{3,c(\mathbb{N})}^{\f13}).\ee
In the spirt of the above theorems and the classical  work \cite{[CCFS]}, one can refine   Dumas and   Sueur's result as follows.
\begin{coro}\label{coro1.5}
Let   $(u,h)   \in L^{\infty}(0,T;L^{2}(\mathbb{T}^3))\cap L^{2}(0,T;H^{1}(\mathbb{T}^3)))$ be a weak solution of the    viscous  Hall-MHD equations  \eqref{vhallMHD}. \begin{enumerate}[(1)]
 \item
 Then the total energy  $\f12\int_{\mathbb{T}^3}(|u|^{2}+|h|^{2})dx $  is invariant  provided that $$u\in L^{3}(0,T;B^{\f13}_{3,\infty}(\mathbb{T}^3))\ \text{and}\ h\in L^{3}(0,T;\underline{B}^{\f23}_{3,VMO}(\mathbb{T}^3)) $$
 \item
The following  magnetic   helicity equality holds that
$$\int_{\mathbb{T}^3}h\cdot H(x,T) dx-\int_{\mathbb{T}^3} h\cdot H(x,0) dx +2\int_0^T\int_{\mathbb{T}^3}(\nabla \times h)\cdot h dxdt=0,$$
provided
$h\in L^{3}(0,T;B^{\f13}_{3,\infty}(\mathbb{T}^3))$.
\end{enumerate}
\end{coro}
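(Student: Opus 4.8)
The plan is to run, separately for the energy and for the magnetic helicity, the mollification/commutator argument that already underlies Theorem~\ref{energythe1.1} and Theorem~\ref{mhthe1.3}, and to check that the only new ingredients — the terms produced by the dissipative operators $-\Delta u$ and $-\Delta h$ — pass to the limit for free under the standing assumption $(u,h)\in L^{2}(0,T;H^{1}(\mathbb{T}^{3}))$. In fact this same parabolic regularity is what allows the Besov hypotheses here to be a touch weaker than in the inviscid theorems (e.g.\ $B^{1/3}_{3,\infty}$ instead of the VMO space of Theorem~\ref{energythe1.1}), exactly as in the energy-equality theory of the Navier--Stokes equations and in the spirit of \cite{[CCFS]}. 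Throughout I would mollify in space, $f^{\varepsilon}:=f\ast\rho_{\varepsilon}$, note that weak solutions in the stated class lie in $C_{w}([0,T];L^{2}(\mathbb{T}^{3}))$ so that the endpoint values are meaningful, and use the elementary embedding $L^{\infty}(0,T;L^{2})\cap L^{2}(0,T;H^{1})\hookrightarrow L^{10/3}((0,T)\times\mathbb{T}^{3})\subseteq L^{3}((0,T)\times\mathbb{T}^{3})$, which makes several hypotheses on $u$ automatic.

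For part (1), I would mollify \eqref{vhallMHD}, test the momentum equation with $u^{\varepsilon}$ and the magnetic equation with $h^{\varepsilon}$, integrate over $\mathbb{T}^{3}\times[0,t]$ and add, reaching
\[
\tfrac12\int_{\mathbb{T}^{3}}\!\big(|u^{\varepsilon}|^{2}+|h^{\varepsilon}|^{2}\big)(t)\,dx+\int_{0}^{t}\!\!\int_{\mathbb{T}^{3}}\!\big(|\nabla u^{\varepsilon}|^{2}+|\nabla h^{\varepsilon}|^{2}\big)+\mathcal{R}^{\mathrm{conv}}_{\varepsilon}+\mathcal{R}^{\mathrm{Hall}}_{\varepsilon}=\tfrac12\int_{\mathbb{T}^{3}}\!\big(|u^{\varepsilon}|^{2}+|h^{\varepsilon}|^{2}\big)(0)\,dx,
\]
where $\mathcal{R}^{\mathrm{conv}}_{\varepsilon}$ collects the Constantin--E--Titi commutators from $u\cdot\nabla u$, $h\cdot\nabla h$, $u\cdot\nabla h$, $h\cdot\nabla u$ and $\mathcal{R}^{\mathrm{Hall}}_{\varepsilon}$ the one from $\nabla\times[(\nabla\times h)\times h]$. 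The dissipation integrals converge to $\int_{0}^{t}\!\int(|\nabla u|^{2}+|\nabla h|^{2})$ by $L^{2}H^{1}$ regularity and the mollified energies converge to the energy, so it remains to send both remainders to $0$. The pieces of $\mathcal{R}^{\mathrm{conv}}_{\varepsilon}$ carrying a factor $h$ are controlled by $h\in L^{3}(0,T;\underline{B}^{2/3}_{3,VMO})$, comfortably inside the Onsager range of a first-order nonlinearity; the purely hydrodynamic commutator is handled with $u\in L^{3}(0,T;B^{1/3}_{3,\infty})$ together with $\nabla u\in L^{2}_{t,x}$, as for Navier--Stokes. The term $\mathcal{R}^{\mathrm{Hall}}_{\varepsilon}$ is the genuinely new one, and I would dispose of it exactly as in the proof of Theorem~\ref{energythe1.1}: a Bony paraproduct splitting plus a Constantin--E--Titi-type estimate adapted to the VMO modulus gives $\mathcal{R}^{\mathrm{Hall}}_{\varepsilon}\to 0$ under $h\in L^{3}(0,T;\underline{B}^{2/3}_{3,VMO})$, the exponent $2/3$ being forced by the second-order structure of the Hall operator. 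Passing to the limit yields conservation of $\tfrac12\int(|u|^{2}+|h|^{2})$.

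For part (2), writing $H=\mathrm{curl}^{-1}h$ so that the magnetic helicity equals $\int_{\mathbb{T}^{3}}h\cdot H$, I would apply $\mathrm{curl}^{-1}$ to the magnetic equation of \eqref{vhallMHD}; using $\mathrm{curl}^{-1}(u\cdot\nabla h-h\cdot\nabla u)=h\times u$ modulo gradients and $\mathrm{curl}^{-1}\Delta h=\Delta H$, this gives \eqref{1.10} with the extra term $-\Delta H$. Mollifying this equation, pairing with $h^{\varepsilon}$, and using the self-adjointness of $\mathrm{curl}^{-1}$ (so $\int H^{\varepsilon}_{t}\cdot h^{\varepsilon}=\int H^{\varepsilon}\cdot h^{\varepsilon}_{t}$ and hence $\tfrac{d}{dt}\int H^{\varepsilon}\cdot h^{\varepsilon}=2\int H^{\varepsilon}_{t}\cdot h^{\varepsilon}$ up to commutators), one finds in the limit that $\int(h\times u)\cdot h$ and $\int[(\nabla\times h)\times h]\cdot h$ vanish by orthogonality, $\int\nabla p\cdot h=0$ by incompressibility, and $2\int\Delta H\cdot h=-2\int(\nabla\times h)\cdot h$ since $\Delta H=-\nabla\times(\nabla\times H)=-\nabla\times h$ for divergence-free $H$; this last term is precisely the $2\int_{0}^{T}\!\int(\nabla\times h)\cdot h$ in the asserted identity. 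The commutators from $h\times u$ and from the MHD interactions are controlled by $u,h\in L^{10/3}_{t,x}\subseteq L^{3}_{t,x}$ (so in particular $u\in L^{3}(0,T;L^{3})$, the hypothesis used in Theorem~\ref{mhthe1.3}), and the Hall-type helicity commutator tends to $0$ under $h\in L^{3}(0,T;B^{1/3}_{3,\infty})$ as in Theorem~\ref{mhthe1.3}, the weakening from the VMO space there to $B^{1/3}_{3,\infty}$ being again granted by $\nabla h\in L^{2}_{t,x}$ (which also ensures $(\nabla\times h)\times h$ is well defined). Sending $\varepsilon\to0$ gives the stated balance.

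The hard part will be the vanishing of the two Hall commutators: because the Hall term carries two spatial derivatives, these are trilinear commutators with borderline scaling — Onsager-critical at $\alpha=2/3$ for the energy and $\alpha=1/3$ for the helicity — so a plain $B^{\alpha}_{3,\infty}$ bound on $h$ yields only a uniform bound, not decay, and one must either invoke the VMO/$c(\mathbb{N})$ refinement (as in Theorem~\ref{energythe1.1}, Theorem~\ref{mhthe1.3} and \cite{[CCFS]}) or, in the present viscous regime, trade a small amount of Besov regularity against the $H^{1}$-smallness of the high frequencies of $h$, combined with a careful paraproduct estimate. By contrast, inserting $-\Delta u$ and $-\Delta h$ is entirely routine given $(u,h)\in L^{2}(0,T;H^{1})$, which is why the corollary follows quickly from the inviscid theorems once their proofs are in place.
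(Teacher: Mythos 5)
Your overall architecture — reduce the viscous corollary to the inviscid Theorems \ref{energythe1.1} and \ref{mhthe1.3}, observing that the dissipative terms $-\Delta u$, $-\Delta h$ pass to the limit harmlessly under $(u,h)\in L^{2}(0,T;H^{1})$ — is exactly the paper's strategy, and your treatment of the easy ingredients (the $L^{\infty}L^{2}\cap L^{2}H^{1}$ interpolation giving $u\in L^{3}(0,T;L^{3})$ for part (2), the orthogonality cancellations, the identity $\Delta H=-\nabla\times h$) is correct. But the decisive step is missing: you yourself flag as ``the hard part'' that a plain $B^{1/3}_{3,\infty}$ bound yields only a uniform $O(1)$ bound on the critical commutators rather than the $o(1)$ decay needed, and you then offer two unexecuted alternatives (``invoke the VMO/$c(\mathbb{N})$ refinement'' or ``a careful paraproduct estimate''). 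As written, the proposal does not prove the statement, because the whole content of the corollary relative to the inviscid theorems is precisely the weakening from $\underline{B}^{1/3}_{3,VMO}$ to $B^{1/3}_{3,\infty}$, and that weakening is exactly the step left open.

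The paper closes this gap with a short regularity-upgrade argument that requires no re-examination of any commutator: from $u\in L^{2}(0,T;H^{1}(\mathbb{T}^{3}))\subseteq L^{2}(0,T;H^{5/6})$ and $H^{5/6}\approx B^{5/6}_{2,2}\hookrightarrow B^{1/3}_{3,2}$ (Besov--Sobolev embedding, losing $3(\tfrac12-\tfrac13)=\tfrac12$ derivatives), one gets $u\in L^{2}(0,T;B^{1/3}_{3,2})$; since $\ell^{2}$-summability forces the dyadic blocks to vanish, $u(t)\in B^{1/3}_{3,c(\mathbb{N})}$ for a.e.\ $t$, and dominated convergence against the assumed $L^{3}_{t}B^{1/3}_{3,\infty}$ majorant upgrades this to $u\in L^{3}(0,T;B^{1/3}_{3,c(\mathbb{N})})\subseteq L^{3}(0,T;\underline{B}^{1/3}_{3,VMO})$ by \eqref{includ}. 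The same computation applied to $h$ handles part (2). At that point the hypotheses of Theorems \ref{energythe1.1} and \ref{mhthe1.3} are literally satisfied and the inviscid proofs apply verbatim (with the extra dissipation terms converging by $L^{2}H^{1}$, as you noted). So the missing idea is not a new paraproduct estimate or a trade-off between Besov regularity and high-frequency smallness of $h$; it is the observation that $L^{2}H^{1}\cap L^{3}_{t}B^{1/3}_{3,\infty}$ already sits inside the Onsager-critical space $L^{3}_{t}B^{1/3}_{3,c(\mathbb{N})}$. You should supply this embedding chain explicitly to complete the argument.
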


Finally, we would like to mention that
four-thirds law of conserved quantities  in the inviscid  electron and Hall
magnetohydrodynamic systems  can be found in \cite{[WC]}.
Moreover, Dai  and   Liu recently constructed the
weak solutions to the nonresistive
E-MHD system \eqref{E-MHD} which do not conserve energy and magnetic
helicity in \cite{[DL]}.    For the non-unique weak solutions in Leray-Hopf class for the three-dimensional Hall-MHD system, the reader may refer to   \cite{[D]}.

The paper is organized as follows. In section 2, we present some notations used in this paper and the key lemma concerning the Constantin-E-Titi type commutator  estimates for the functions in  Besov-VMO space $\underline{B}^{\alpha}_{p,VMO}$ and   homogeneous Besov space $\dot{B}^{\beta}_{p,\infty}$. Section 3 and  section 4 are devoted to the proof of the energy and magnetic helicity conservation for the inviscid  E-MHD equations \eqref{E-MHD} and H-MHD equations \eqref{hallMHD}, respectively.
 Finally, in Section 5, we consider the conserved quantities  in the   viscous E-MHD equations \eqref{visE-MHD} and  Hall-MHD equations \eqref{vhallMHD}.

\section{Notations and some auxiliary lemmas} \label{section2}

First, we introduce some notations used in this paper.
 For $p\in [1,\,\infty]$, the notation $L^{p}(0,\,T;X)$ stands for the set of measurable functions $f$ on the interval $(0,\,T)$ with values in $X$ and $\|f\|_{X}$ belonging to $L^{p}(0,\,T)$.

Second,
for $s\in\mathbb{R}$ and $1\leq p \leq \infty$, we define the Besov semi-norm $\norm{f}_{\dot{B}^s_{p,\infty}(\mathbb{T}^{d})}$ and Besov norm $\norm{f}_{B^s_{p,\infty}(\mathbb{T}^{d})}$ of $f\in \mathcal{S}^{'}$ as
$$\ba
&\|f\|_{\dot{B}_{p, \infty}^s(\mathbb{T}^d)}=\||y|^{-s}\| f(x-y)-f(x)\|_{L_x^p(\mathbb{T}^d)}\|_{L_y^{\infty}(\mathbb{T}^d)},\\
&\norm{f}_{B^s_{p,\infty}(\mathbb{T}^{d})}=\norm{f}_{{L^p}(\mathbb{T}^{d})}
+\norm{f}_{\dot{B}^s_{p,\infty}(\mathbb{T}^{d})}.
\ea$$
In the spirit of  \cite{[FW]},  we  recall
the Besov-VMO space  $L^p(0,T;\underline{B}^{\alpha}_{q,VMO}(\mathbb{T}^d))$  of function $f$ if it satisfies
$$\|f\|_{L^p(0,T;L^q(\mathbb{T}^d))}<\infty,$$
and
$$\ba
&\lim_{\varepsilon\rightarrow0}\f{1}{\varepsilon^{\alpha}}\left(\int_0^T\B[\int_{\mathbb{T}^d} \fbxo|f(x)-f(y)|^{q}dydx \B]^{\f{p}{q}}dt\right)^{\f1p}\\
=&\lim_{\varepsilon\rightarrow0}\f{1}{\varepsilon^{\alpha}}\left(\int_0^T\B[\int_{\mathbb{T}^d} \fbxoo|f(x)-f(x-y)|^{q}dydx \B]^{\f{p}{q}}dt\right)^{\f{1}{p}}=0.
\ea$$

Eventually, we let $\eta_{\varepsilon}:\mathbb{R}^{d}\rightarrow \mathbb{R}$ be a standard mollifier, i.e. $\eta(x)=C_0e^{-\frac{1}{1-|x|^2}}$ for $|x|<1$ and $\eta(x)=0$ for $|x|\geq 1$, where $C_0$ is a constant such that $\int_{\mathbb{R}^d}\eta (x) dx=1$. For $\varepsilon>0$, we define the rescaled mollifier by  $\eta_{\varepsilon}(x)=\frac{1}{\varepsilon^d}\eta(\frac{x}{\varepsilon})$, and for  any function $f\in L^1_{\rm loc}(\mathbb{R}^d)$, its mollified version is defined by
\be\label{mollified}
f^\varepsilon(x)=(f*\eta_{\varepsilon})(x)=\int_{\mathbb{R}^d}f(x-y)\eta_{\varepsilon}(y)dy,\ \ x\in \mathbb{R}^d.
\ee

Next, we state several useful lemmas which will be frequently used  in the proof of present paper.
\begin{lemma}\label{lem2.1}(\cite{[WWY],[BGSTW]})
Let $\alpha \in (0,1)$,  $ p,q\in [1,\infty]$,  and $k\in \mathbb{N}^+$. Assume that  $f\in L^p(0,T;\dot{B}^\alpha_{q,\infty}(\mathbb{T}^d))$ and $g\in L^p(0,T; \underline{B}^{\alpha}_{q,VMO} (\mathbb{T}^d))$,   then  letting $\varepsilon\to 0$, there hold that
 \begin{enumerate}[(1)]
 \item $ \|f^{\varepsilon} -f \|_{L^{p}(0,T;L^{q}(\mathbb{T}^d))}\leq \text{O}(\varepsilon^{\alpha})\|f\|_{L^p(0,T;\dot{B}^\alpha_{q,\infty}(\mathbb{T}^d))}$;
   \item   $ \|\nabla^{k}f^{\varepsilon}  \|_{L^{p}(0,T;L^{q}(\mathbb{T}^d))}\leq \text{O}(\varepsilon^{\alpha-k})
       \|f\|_{L^p(0,T;\dot{B}^\alpha_{q,\infty}(\mathbb{T}^d))}$;
 \item $ \|g^{\varepsilon} -g \|_{L^{p}(0,T;L^{q}(\mathbb{T}^d))}\leq \text{o}(\varepsilon^{\alpha}) $;
   \item   $ \|\nabla^{k}g^{\varepsilon}  \|_{L^{p}(0,T;L^{q}(\mathbb{T}^d))}\leq \text{o}(\varepsilon^{\alpha-k}) .$
\end{enumerate}
\end{lemma}

In the following, we will state a generalized Constantin-E-Titi type commutator estimates involving the Besov-VMO spaces. The special case $p=q=3$ and $\alpha=\beta=\f13$ in  (1) of the following lemma was presented in \cite{[BGSTW]} and the following version is due to
\cite{[WWY]}. For the convenience of readers, we outline the proof to make the paper more self-contained.
\begin{lemma}(\cite{[WWY]})	\label{lem2.3}
	Assume that $0<\alpha,\beta<1$, $1\leq p,q,p_{1},p_{2}\leq\infty$ and $\frac{1}{p}=\frac{1}{p_1}+\frac{1}{p_2}$.
Then, there holds
	\begin{align} \label{cet}
		\|(fg)^{\varepsilon}- f^{\varepsilon}g^{\varepsilon}\|_{L^p(0,T;L^q(\mathbb{T}^d))} \leq  \text{o}(\varepsilon^{\alpha+\beta}),\ \text{as} \ \varepsilon\to 0,	
	\end{align}
provided that one of the following three conditions is satisfied,
\begin{enumerate}[(1)]
\item  $f\in L^{p_1}(0,T;\underline{B}^{\alpha}_{q_1,VMO} (\mathbb{T}^d) )$, $g\in L^{p_2}(0,T;\underline{B}^{\beta}_{q_2,VMO}(\mathbb{T}^d)  )$, $1\leq q_{1},q_{2}\leq\infty$, $\frac{1}{q}=\frac{1}{q_1}+\frac{1}{q_2}$;

 \item  $f\in L^{p_1}(0,T;\underline{B}^{\alpha}_{q_{1},VMO}(\mathbb{T}^d))$, $g\in L^{p_2}(0,T;\dot{B}^{\beta}_{q_{2},\infty} (\mathbb{T}^d))$, $1\leq q_{1},q_{2}\leq\infty$, $\frac{1}{q}=\frac{1}{q_1}+\frac{1}{q_2},q_{2}\geq \f{q_{1}}{q_{1}-1}$ and $p_{2}\geq\f{q_{1}}{q_{1}-1}$.
 \end{enumerate}\end{lemma}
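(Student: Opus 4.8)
The plan is to reduce the whole estimate to Lemma~\ref{lem2.1} by means of the Constantin--E--Titi algebraic identity for the commutator. Writing $\delta_{y}f(x):=f(x-y)-f(x)$, a direct expansion of the product under the mollification integral gives, for a.e.\ $t\in(0,T)$,
\be\label{ceti}
(fg)^{\varepsilon}-f^{\varepsilon}g^{\varepsilon}=\int_{\R^{d}}\eta_{\varepsilon}(y)\,\delta_{y}f(x)\,\delta_{y}g(x)\,dy-\big(f^{\varepsilon}-f\big)\big(g^{\varepsilon}-g\big)=:\mathcal I_{\varepsilon}-\mathcal J_{\varepsilon}.
\ee
Hence \eqref{cet} follows once we prove $\|\mathcal I_{\varepsilon}\|_{L^{p}(0,T;L^{q}(\mathbb{T}^d))}=\text{o}(\varepsilon^{\alpha+\beta})$ and $\|\mathcal J_{\varepsilon}\|_{L^{p}(0,T;L^{q}(\mathbb{T}^d))}=\text{o}(\varepsilon^{\alpha+\beta})$ as $\varepsilon\to0$; the two hypotheses (1) and (2) enter only through the estimate of these two pieces.

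The term $\mathcal J_{\varepsilon}$ is the elementary one: Hölder in space with $\f1q=\f1{q_{1}}+\f1{q_{2}}$ and in time with $\f1p=\f1{p_{1}}+\f1{p_{2}}$ bound it by $\|f^{\varepsilon}-f\|_{L^{p_{1}}(0,T;L^{q_{1}})}\,\|g^{\varepsilon}-g\|_{L^{p_{2}}(0,T;L^{q_{2}})}$. Since $f\in L^{p_1}(0,T;\underline{B}^{\alpha}_{q_1,VMO})$, Lemma~\ref{lem2.1}(3) makes the first factor $\text{o}(\varepsilon^{\alpha})$; in case~(1) the second factor is $\text{o}(\varepsilon^{\beta})$ by the same part of Lemma~\ref{lem2.1}, while in case~(2) it is $\text{O}(\varepsilon^{\beta})$ by Lemma~\ref{lem2.1}(1). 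In either case $\|\mathcal J_{\varepsilon}\|_{L^{p}(0,T;L^{q})}=\text{o}(\varepsilon^{\alpha+\beta})$.

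For $\mathcal I_{\varepsilon}$ the key point is that a Minkowski/Hölder pass produces precisely the averaged-increment quantity that enters the definition of $\underline{B}^{\cdot}_{\cdot,VMO}$. Since $\operatorname{supp}\eta_{\varepsilon}\subset B_{\varepsilon}(0)$ and $0\le\eta_{\varepsilon}\le C\varepsilon^{-d}\mathbf 1_{B_{\varepsilon}(0)}$, Minkowski's integral inequality in $x$ followed by Hölder in $x$ with exponents $q_{1},q_{2}$ give, for a.e.\ $t$,
\be
\|\mathcal I_{\varepsilon}(t)\|_{L^{q}_{x}}\le C\fbxoo\|\delta_{y}f(t)\|_{L^{q_{1}}_{x}}\,\|\delta_{y}g(t)\|_{L^{q_{2}}_{x}}\,dy .
\ee
Applying Hölder in the normalized $y$-average (licit because $\f1{q_{1}}+\f1{q_{2}}\le1$, the residual exponent being harmless on a probability space) and then Fubini, this is bounded by $C\,\Phi_{f}(t,\varepsilon)\,\Phi_{g}(t,\varepsilon)$, where $\Phi_{f}(t,\varepsilon)=\big(\int_{\mathbb{T}^{d}}\fbxoo|f(t,x)-f(t,x-y)|^{q_{1}}dy\,dx\big)^{1/q_{1}}$ and $\Phi_{g}$ is defined analogously with $q_{2}$. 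One more Hölder in time with exponents $p_{1},p_{2}$ yields $\|\mathcal I_{\varepsilon}\|_{L^{p}(0,T;L^{q})}\le C\|\Phi_{f}(\cdot,\varepsilon)\|_{L^{p_{1}}(0,T)}\,\|\Phi_{g}(\cdot,\varepsilon)\|_{L^{p_{2}}(0,T)}$. By the very definition of $\underline{B}^{\alpha}_{q_{1},VMO}$ one has $\|\Phi_{f}(\cdot,\varepsilon)\|_{L^{p_{1}}(0,T)}=\text{o}(\varepsilon^{\alpha})$; in case~(1) $\|\Phi_{g}(\cdot,\varepsilon)\|_{L^{p_{2}}(0,T)}=\text{o}(\varepsilon^{\beta})$ for the same reason, whereas in case~(2) one uses $\|\delta_{y}g(t)\|_{L^{q_{2}}_{x}}\le|y|^{\beta}\|g(t)\|_{\dot{B}^{\beta}_{q_{2},\infty}}\le\varepsilon^{\beta}\|g(t)\|_{\dot{B}^{\beta}_{q_{2},\infty}}$ for $|y|\le\varepsilon$ to obtain $\|\Phi_{g}(\cdot,\varepsilon)\|_{L^{p_{2}}(0,T)}\le C\varepsilon^{\beta}\|g\|_{L^{p_{2}}(0,T;\dot{B}^{\beta}_{q_{2},\infty})}=\text{O}(\varepsilon^{\beta})$. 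In both cases $\|\mathcal I_{\varepsilon}\|_{L^{p}(0,T;L^{q})}=\text{o}(\varepsilon^{\alpha+\beta})$, which together with the estimate for $\mathcal J_{\varepsilon}$ proves \eqref{cet}.

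The only genuinely delicate point is case~(2). A function of the homogeneous class $\dot{B}^{\beta}_{q_{2},\infty}$ obeys only the non-strict decay rate $\text{O}(\varepsilon^{\beta})$ of Lemma~\ref{lem2.1}(1), in contrast with the little-$\text{o}$ rates of Lemma~\ref{lem2.1}(3)--(4) available in the Besov--VMO class; therefore the decisive $\text{o}$-gain in \eqref{cet} has to be supplied entirely by the VMO factor $f$, which is why at least one of the two functions is required to lie in a Besov--VMO space. The standing exponent restrictions in~(2) are the book-keeping that makes the successive Hölder splittings in $x$, $y$ and $t$ admissible and guarantees that $fg$ and all intermediate quantities are well defined in $L^{q}$ (note that $q_{2}\ge q_{1}/(q_{1}-1)$ is exactly $\f1{q_{1}}+\f1{q_{2}}\le1$, needed for the Hölder pairings in $x$ and in the $y$-average, while $p_{2}\ge q_{1}/(q_{1}-1)$ plays the analogous role in time); in case~(1) the fully symmetric structure makes such constraints unnecessary.
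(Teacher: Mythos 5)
Your proof is correct and follows essentially the same route as the paper: the Constantin--E--Titi identity splits the commutator into the averaged-increment term and $(f-f^{\varepsilon})(g-g^{\varepsilon})$, the latter is disposed of by H\"older plus Lemma \ref{lem2.1}, and the former by Minkowski/H\"older passes that produce exactly the averaged increments appearing in the definition of $\underline{B}^{\cdot}_{\cdot,VMO}$, followed by H\"older in time. The only (harmless) divergence is in case (2), where you bound $\|\delta_{y}g(t)\|_{L^{q_{2}}_{x}}\le \varepsilon^{\beta}\|g(t)\|_{\dot{B}^{\beta}_{q_{2},\infty}}$ pointwise in $y$ rather than performing the paper's conjugate-exponent H\"older split in the $y$-average (which is where the hypotheses $q_{2},p_{2}\ge q_{1}/(q_{1}-1)$ are actually invoked there); your variant reaches the same $O(\varepsilon^{\beta})\cdot o(\varepsilon^{\alpha})$ conclusion.
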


\begin{proof}
The key point of this lemma is that the following
 Constantin-E-Titi identity   in \cite{[CET]}  
\be\label{CETI}\ba&(fg)^{\varepsilon}(x)- f^{\varepsilon}g^{\varepsilon}(x)\\
=&
\int_{B_{\varepsilon}(0)}\eta_{\varepsilon}(y)
\B[f(x-y)-f(x)\B]\B[g(x-y)-g(x)\B]dy-
(f-f^{\varepsilon})(g-g^{\varepsilon})(x)\\
=&\,I+II.
	\ea\ee
	(1) It follows from H\"older's inequality  that
$$
\ba
&|I|\\
 \leq&
 C\B[\fbxoo
|f(x-y)-f(x)|^{q_{1}} dy\B]^{\f{1}{q_{1}}}\B[\fbxoo
|g(x-y)-g(x)|^{q_{2}} dy\B]^{\f{1}{q_{2}}}.\ea$$
Performing  a space-time integration and using the definition of Besov-VMO space, as $\varepsilon\to0 $,  we note that
\be\label{key0}\ba
&\|I\|_{L^p(0,T;L^q(\mathbb{T}^d))}\\
\leq & C\B\|\left( \int_{\mathbb{T}^d}\fbxoo
|f(x-y)-f(x)|^{q_{1}} dy\right)^{\f{1}{q_{1}}}\B\|_{L^{p_1}(0,T)} \B\| \left(\int_{\mathbb{T}^d}\fbxoo
|g(x-y)-g(x)|^{q_{1}} dy\right)^{\f{1}{q_{2}}}\B\|_{L^{p_2}(0,T)}\\
\leq& o(\varepsilon^{\alpha+\beta}),
\ea\ee
where we need to require $\f{1}{p}=\f{1}{p_1}+\f{1}{p_2}, \f{1}{q}=\f{1}{q_1}+\f{1}{q_2}$.

We deduce from the  H\"older inequality and Lemma \ref{lem2.1} that, as $\varepsilon\to 0$,  
\be\label{key2}\begin{aligned}  &\|II\|_{L^p(0,T;L^q(\mathbb{T}^d))} 
	\leq& C\| f-f^{\varepsilon}\|_{L^{p_1}(0,T;L^{q_1}(\mathbb{T}^d))}  \| g-g^{\varepsilon}\|_{L^{p_2}(0,T;L^{q_2}(\mathbb{T}^d))} 
	\leq& o(\varepsilon^{\alpha+\beta}).
	\end{aligned} \ee
Then substituting \eqref{key2} and \eqref{key0} into \eqref{CETI}, we finish the proof of this part.

(2) The  H\"older inequality implies that
$$|I|\leq \f{1}{\varepsilon^{d}}\B[\int_{B_{\varepsilon}(0)}
\big|\eta \big(\f{y}{\varepsilon}\big)
 [g(x-y)-g(x)]\big|^{\f{q_{1}}{q_{1}-1}}dy
 \B]^{1-\f{1}{q_{1}}}\B[\int_{B_{\varepsilon}(0)}
|f(x-y)-f(x)|^{q_{1}} dy\B]^{\f{1}{q_{1}}}.
$$
Combining this,  the  H\"older inequality again and Minkowski inequality, we find
$$\ba
&\|I\|_{L^{q}(\mathbb{T}^d)}\\ \leq&
C\B[\f{1}{\varepsilon^d}\int_{B_{\varepsilon}(0)}
\big
 \| g(x-y)-g(x) \|_{L^{q_{2}}(\mathbb{T}^d)} ^{\f{q_{1}}{q_{1}-1}}dy\B]^{1-\f{1}{q_{1}}}\B[\int_{\mathbb{T}^d}\fbxoo
|f(x-y)-f(x)|^{q_{1}} dydx\B]^{\f{1}{q_{1}}},
\ea$$
where we used $\f{1}{q}=\f{1}{q_1}+\f{1}{q_2}$ and $q_{2}\geq \f{ q_{1}}{q_{1}-1}$.
Thereby, by utilizing H\"older inequality and Minkowski inequality once again, as $\varepsilon\to0$, we observe that
\be\label{key1}\ba & \|I\|_{L^p(0,T;L^q(\mathbb{T}^d))}\\ \leq&C \B[\f{1}{\varepsilon^d}\int_{B_{\varepsilon}(0)}
 \| g(x-y)-g(x) \|_{L^{p_{2}}(0,T;L^{q_{2}}(\mathbb{T}^d))} ^{\f{q_{1}}{q_{1}-1}}dy\B]^{1-\f{1}{q_{1}}}
 \B\| \int_{\mathbb{T}^d}\fbxoo
|f(x-y)-f(x)|^{q_{1}} dydx\B\|_{L^{p_1}(0,T)}\\
  \leq& o(\varepsilon^{\alpha+\beta}).
\ea\ee
Plugging  \eqref{key1} and \eqref{key2} into \eqref{CETI}, we get the desired estimate. The proof of this lemma is completed. 
\end{proof}
\begin{lemma}(\cite{[WWY]})	\label{lem2.5}
	Assume that $0<\alpha,\beta<1$, $1\leq p,q,p_{1},p_{2}\leq\infty$ and $\frac{1}{p}=\frac{1}{p_1}+\frac{1}{p_2}$. Let  $f^{\varepsilon}$ and $g^{\varepsilon}$ be defined by \eqref{mollified}
	Then as $\varepsilon\to 0$,  there holds
	\begin{align} \label{cet2}
		\|(fg)^{\varepsilon}- f^{\varepsilon}g^{\varepsilon}\|_{L^p(0,T;L^q(\mathbb{T}^d))} \leq  \text{o}(\varepsilon^{\alpha+\beta}),	
	\end{align}
	provided that one of the following three conditions is satisfied,
	\begin{enumerate}[(1)]
		\item  $f\in L^{p_1}(0,T;\dot{B}^{\alpha}_{q_{1},c(\mathbb{N})} (\mathbb{T}^d))$, $g\in L^{p_2}(0,T;\dot{B}^{\beta}_{q_{2},\infty} (\mathbb{T}^d))$, $1\leq q_{1},q_{2}\leq\infty$, $\frac{1}{q}=\frac{1}{q_1}+\frac{1}{q_2}$;
		\item  $\nabla f\in   L^{p_1}(0,T;\dot{B}^{\alpha}_{q_{1},c(\mathbb{N})} (\mathbb{T}^d))$, $\nabla g\in L^{p_2}(0,T;\dot{B}^{\beta}_{q_{2},\infty}(\mathbb{T}^d) )$,  $\f{2}{d}+\f1q=\frac{1}{q_{1}}+\frac{1}{q_{2}}$, $1\leq q_{1},q_{2}<d$;
		\item  $  f\in   L^{p_1}(0,T;\dot{B}^{\alpha}_{q_{1},c(\mathbb{N})} (\mathbb{T}^d))$, $\nabla g\in L^{p_2}(0,T;\dot{B}^{\beta}_{q_{2},\infty} (\mathbb{T}^d))$,  $\f{1}{d}+\f1q=\frac{1}{q_{1}}+\frac{1}{q_{2}}$, $1\leq q_{2}<d$,  $1\leq q_{1}\leq\infty$.
\end{enumerate}\end{lemma}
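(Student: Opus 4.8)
The plan is to obtain all three parts of Lemma \ref{lem2.5} from one key estimate, the Constantin--E--Titi identity \eqref{CETI}, and Sobolev embeddings of homogeneous Besov spaces, following the scheme already used for Lemma \ref{lem2.3}. The crucial point, genuinely stronger than what is available in $\underline{B}^{\alpha}_{q,VMO}$ and the reason the Onsager critical space is needed here, is the \emph{pointwise-in-$y$} modulus-of-continuity bound
\[
\big\|\,\sup_{|y|\le\varepsilon}\|f(\cdot-y)-f(\cdot)\|_{L^{q_1}(\mathbb{T}^d)}\,\big\|_{L^{p_1}(0,T)}=\text{o}(\varepsilon^{\alpha}),\qquad \varepsilon\to 0,
\]
valid for $f\in L^{p_1}(0,T;\dot{B}^{\alpha}_{q_1,c(\mathbb{N})})$. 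I would prove it by splitting $f=\sum_{j}\Delta_j f$ at the mollification scale, i.e. at $J$ with $2^{-J}\sim\varepsilon$: the high part is controlled by $2\sum_{j>J}\|\Delta_j f\|_{L^{q_1}}\lesssim 2^{-J\alpha}\sup_{j>J}\big(2^{j\alpha}\|\Delta_j f\|_{L^{q_1}}\big)$, while the low part is controlled by $|y|\sum_{j\le J}2^{j}\|\Delta_j f\|_{L^{q_1}}$, splitting this last sum once more at $j=J/2$. Since the sequence $2^{j\alpha}\|\Delta_j f\|_{L^{q_1}}$ lies in $c_0$ (that is the content of $c(\mathbb{N})$) and $\alpha<1$, both pieces are $\text{o}(\varepsilon^{\alpha})$ uniformly for $|y|\le\varepsilon$; the $L^{p_1}(0,T)$ statement follows because the $c(\mathbb{N})$ hypothesis is imposed together with the time integration. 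For the companion factor, $g\in L^{p_2}(0,T;\dot{B}^{\beta}_{q_2,\infty})$ gives directly $\sup_{|y|\le\varepsilon}\|g(\cdot-y)-g(\cdot)\|_{L^{q_2}}\le\varepsilon^{\beta}\|g\|_{\dot{B}^{\beta}_{q_2,\infty}}$, an $\text{O}(\varepsilon^{\beta})$ bound in $L^{p_2}(0,T)$.

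\textbf{Proof of (1).} Write $(fg)^{\varepsilon}-f^{\varepsilon}g^{\varepsilon}=I+II$ as in \eqref{CETI}. For $II=-(f-f^{\varepsilon})(g-g^{\varepsilon})$: since $\dot{B}^{\alpha}_{q_1,c(\mathbb{N})}\subseteq\underline{B}^{\alpha}_{q_1,VMO}$, Lemma \ref{lem2.1}(3) gives $\|f-f^{\varepsilon}\|_{L^{p_1}(0,T;L^{q_1})}=\text{o}(\varepsilon^{\alpha})$ and Lemma \ref{lem2.1}(1) gives $\|g-g^{\varepsilon}\|_{L^{p_2}(0,T;L^{q_2})}=\text{O}(\varepsilon^{\beta})$, so Hölder in $x$ (using $\tfrac1q=\tfrac1{q_1}+\tfrac1{q_2}$) and in $t$ (using $\tfrac1p=\tfrac1{p_1}+\tfrac1{p_2}$) yield $\|II\|_{L^p(0,T;L^q)}=\text{o}(\varepsilon^{\alpha+\beta})$. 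For $I$, the Minkowski and Hölder inequalities in $x$ and then integration against $\eta_{\varepsilon}$ give $\|I\|_{L^q}\le\big(\sup_{|y|\le\varepsilon}\|f(\cdot-y)-f(\cdot)\|_{L^{q_1}}\big)\big(\sup_{|y|\le\varepsilon}\|g(\cdot-y)-g(\cdot)\|_{L^{q_2}}\big)$, and Hölder in $t$ together with the core estimate bounds this by $\text{o}(\varepsilon^{\alpha})\cdot\text{O}(\varepsilon^{\beta})=\text{o}(\varepsilon^{\alpha+\beta})$. Adding the two contributions proves \eqref{cet2} under (1).

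\textbf{Proof of (2) and (3).} These reduce to (1) via Besov-space Sobolev embedding. If $\nabla f\in\dot{B}^{\alpha}_{q_1,c(\mathbb{N})}$ with $q_1<d$, then $f\in\dot{B}^{\alpha+1}_{q_1,c(\mathbb{N})}\hookrightarrow\dot{B}^{\alpha}_{\tilde q_1,c(\mathbb{N})}$ with $\tfrac1{\tilde q_1}=\tfrac1{q_1}-\tfrac1d$, and this embedding preserves the $c_0$-property of the Littlewood--Paley coefficients by Bernstein's inequality ($2^{j\alpha}\|\Delta_j f\|_{L^{\tilde q_1}}\lesssim 2^{j(\alpha+1)}\|\Delta_j f\|_{L^{q_1}}\to 0$). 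Likewise $\nabla g\in\dot{B}^{\beta}_{q_2,\infty}$ with $q_2<d$ gives $g\in\dot{B}^{\beta}_{\tilde q_2,\infty}$ with $\tfrac1{\tilde q_2}=\tfrac1{q_2}-\tfrac1d$. Then $\tfrac1{\tilde q_1}+\tfrac1{\tilde q_2}=\tfrac1{q_1}+\tfrac1{q_2}-\tfrac2d=\tfrac1q$ precisely by the hypothesis of (2), so part (1) applies with exponents $(\tilde q_1,\tilde q_2)$ in place of $(q_1,q_2)$; this is (2). Part (3) is identical, applying the embedding only to $g$: the condition $\tfrac1d+\tfrac1q=\tfrac1{q_1}+\tfrac1{q_2}$ is exactly what makes $\tfrac1{q_1}+\tfrac1{\tilde q_2}=\tfrac1q$, while $f$ already lies in $\dot{B}^{\alpha}_{q_1,c(\mathbb{N})}$.

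\textbf{Main obstacle.} The only non-routine ingredient is the core estimate of the first paragraph: converting membership in the critical space $\dot{B}^{\alpha}_{q_1,c(\mathbb{N})}$ into a uniform-in-$y$ bound $\text{o}(\varepsilon^{\alpha})$ carried along the time variable in $L^{p_1}(0,T)$, which forces one to balance the Littlewood--Paley cutoff against the mollification scale and to use that the defining sequence belongs to $c_0$ rather than merely $\ell^{\infty}$ (with $\alpha<1$ absorbing the derivative produced on the low-frequency block). Once this is in hand, the CET identity, the Hölder bookkeeping, and the Sobolev embeddings are entirely standard; alternatively, the whole statement can be quoted from \cite{[WWY]}.
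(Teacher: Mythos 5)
Your proposal is essentially correct, but note that the paper itself does not prove this lemma: it is quoted verbatim from \cite{[WWY]}, and the only proof given in the text is for the companion Lemma \ref{lem2.3} (the $\underline{B}^{\alpha}_{q,VMO}$ version). Measured against that proof, your argument follows the same skeleton --- the Constantin--E--Titi identity \eqref{CETI}, H\"older for the term $II=(f-f^{\varepsilon})(g-g^{\varepsilon})$, and a modulus-of-continuity bound for the term $I$ --- but the two ingredients you add are genuinely different and are exactly what the $c(\mathbb{N})$ setting requires. First, because $\dot{B}^{\alpha}_{q_1,c(\mathbb{N})}$ gives a \emph{pointwise-in-$y$} (or Littlewood--Paley) smallness statement rather than the ball-averaged one available in $\underline{B}^{\alpha}_{q_1,VMO}$, your Minkowski estimate $\|I\|_{L^q}\le\sup_{|y|\le\varepsilon}\|f(\cdot-y)-f\|_{L^{q_1}}\cdot\sup_{|y|\le\varepsilon}\|g(\cdot-y)-g\|_{L^{q_2}}$ works without the auxiliary constraints $q_2\ge q_1/(q_1-1)$, $p_2\ge q_1/(q_1-1)$ that the paper needs in Lemma \ref{lem2.3}(2); this is cleaner. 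Second, your reduction of parts (2) and (3) to part (1) via the Bernstein/Sobolev embedding $\dot{B}^{\alpha+1}_{q_1,r}\hookrightarrow\dot{B}^{\alpha}_{\tilde q_1,r}$, $1/\tilde q_1=1/q_1-1/d$, with the observation that Bernstein preserves the $c_0$ property of the coefficients, correctly accounts for the exponent relations $\f2d+\f1q=\f1{q_1}+\f1{q_2}$ and $\f1d+\f1q=\f1{q_1}+\f1{q_2}$.

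The one step you should make precise is the passage of the $\text{o}(\varepsilon^{\alpha})$ bound through the $L^{p_1}(0,T)$ norm. For a fixed time the high/low frequency splitting at $2^{-J}\sim\varepsilon$ gives $\text{o}(\varepsilon^{\alpha})$ with a rate depending on $t$; to conclude $\big\|\sup_{|y|\le\varepsilon}\|f(\cdot-y,t)-f(\cdot,t)\|_{L^{q_1}}\big\|_{L^{p_1}_t}=\text{o}(\varepsilon^{\alpha})$ you must either adopt the time-integrated formulation of the $c(\mathbb{N})$ condition (as in \cite{[CCFS]}, where the limit $\lim_{j\to\infty}\|2^{j\alpha}\|\Delta_j f\|_{L^{q_1}}\|_{L^{p_1}(0,T)}=0$ is part of the definition) or invoke dominated convergence with the majorant $\varepsilon^{-\alpha}\sup_{|y|\le\varepsilon}\|f(\cdot-y,t)-f(\cdot,t)\|_{L^{q_1}}\le C\|f(t)\|_{\dot{B}^{\alpha}_{q_1,\infty}}\in L^{p_1}(0,T)$. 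You flag this, so it is a matter of writing rather than a gap. With that caveat the proof is sound and self-contained, whereas the paper outsources it.
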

 \begin{lemma}\label{lem2.4}(\cite{[YWW],[WWY]})
	Let $ p,q,p_1,q_1,p_2,q_2\in[1,+\infty)$ with
$\frac{1}{p}=\frac{1}{p_1}+\frac{1}{p_2},\frac{1}{q}=\frac{1}{q_1}+\frac{1}{q_2} $. Assume $f\in L^{p_1}(0,T;L^{q_1}(\mathbb{T}^d)) $ and $g\in
L^{p_2}(0,T;L^{q_2}(\mathbb{T}^d))$, then as $\varepsilon\to0$, it holds
	\begin{equation}\label{a4}
	\|(fg)^\varepsilon-f^\varepsilon
g^\varepsilon\|_{L^p(0,T;L^q(\mathbb{T}^d))}\rightarrow 0,
	\end{equation}
and
\begin{equation}\label{b7}
	\|(f\times g)^\varepsilon-f^\varepsilon\times g^\varepsilon\|_{L^p(0,T;L^q(\mathbb{T}^d))}\rightarrow 0.
\end{equation}
\end{lemma}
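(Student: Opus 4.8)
The plan is to reuse the Constantin--E--Titi identity \eqref{CETI} exactly as in the proof of Lemma \ref{lem2.3} (with the scalar product), but to replace the Besov-space input there, which supplied the rate $\varepsilon^{\alpha+\beta}$, by the bare strong continuity of translations in Lebesgue spaces. Under the stated hypotheses $p_1,p_2,q_1,q_2<\infty$ this still forces convergence to $0$, only without a rate, so the conclusion is the qualitative statement $\to 0$ rather than an $\text{o}(\varepsilon^{\alpha+\beta})$ bound.

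Write $(fg)^{\varepsilon}-f^{\varepsilon}g^{\varepsilon}=I+II$, where $I$ is the double-difference integral $\int_{B_\varepsilon(0)}\eta_\varepsilon(y)[f(x-y)-f(x)][g(x-y)-g(x)]\,dy$ and $II=-(f-f^{\varepsilon})(g-g^{\varepsilon})$, exactly as in \eqref{CETI}. For $II$, H\"older in space ($\tfrac1q=\tfrac1{q_1}+\tfrac1{q_2}$) followed by H\"older in time ($\tfrac1p=\tfrac1{p_1}+\tfrac1{p_2}$) gives $\|II\|_{L^p(0,T;L^q)}\le \|f-f^{\varepsilon}\|_{L^{p_1}(0,T;L^{q_1})}\,\|g-g^{\varepsilon}\|_{L^{p_2}(0,T;L^{q_2})}$; since $\|f(\cdot,t)-f^{\varepsilon}(\cdot,t)\|_{L^{q_1}}\to0$ for a.e.\ $t$ (continuity of mollification in $L^{q_1}$, valid because $q_1<\infty$) with integrable majorant $2\|f(\cdot,t)\|_{L^{q_1}}\in L^{p_1}(0,T)$, dominated convergence in $t$ sends the first factor to $0$ while the second stays bounded by $2\|g\|_{L^{p_2}(0,T;L^{q_2})}$. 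For $I$, Minkowski's integral inequality together with H\"older in space yields the pointwise-in-time bound $\|I(\cdot,t)\|_{L^q(\mathbb{T}^d)}\le a_\varepsilon(t)\,b_\varepsilon(t)$, where $a_\varepsilon(t)=\sup_{|y|\le\varepsilon}\|f(\cdot-y,t)-f(\cdot,t)\|_{L^{q_1}}$ and $b_\varepsilon$ is the analogue for $g$; integrating in $t$ by H\"older and once more invoking dominated convergence, using $a_\varepsilon(t)\le2\|f(\cdot,t)\|_{L^{q_1}}$ and $a_\varepsilon(t)\to0$ for a.e.\ $t$, gives $\|I\|_{L^p(0,T;L^q)}\to0$. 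Adding the two contributions proves \eqref{a4}.

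For the cross-product identity \eqref{b7}, the only change is to use the analogue of \eqref{CETI} with $\times$ in place of the scalar product; since $|a\times b|\le|a|\,|b|$ componentwise, the two estimates above transfer verbatim, yielding $\|(f\times g)^\varepsilon-f^\varepsilon\times g^\varepsilon\|_{L^p(0,T;L^q)}\to0$.

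The one place where the hypotheses are genuinely used — and the only subtlety beyond routine H\"older/Minkowski bookkeeping — is the passage from the spatial estimate to the space-time estimate: because there is no convergence rate, $p_1,p_2<\infty$ is needed to run dominated convergence in the time variable, and $q_1,q_2<\infty$ is needed for translations to act continuously on $L^{q_i}$. One should also record in passing that $a_\varepsilon$ (and $b_\varepsilon$), although defined through a supremum over $y$, is still measurable in $t$, since $y\mapsto\|f(\cdot-y,t)-f(\cdot,t)\|_{L^{q_1}}$ is continuous and hence the supremum may be taken over a countable dense subset of $B_\varepsilon(0)$.
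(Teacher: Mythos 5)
Your proof is correct, and it follows exactly the route the paper takes for Lemma \ref{lem2.3} (and that the cited sources use for this lemma): the Constantin--E--Titi decomposition \eqref{CETI}, with the Besov moduli replaced by the strong continuity of translations in $L^{q_i}$ and dominated convergence in time, which is precisely where the finiteness assumptions $p_i,q_i<\infty$ enter. The paper itself only cites \cite{[YWW],[WWY]} for this statement, so nothing further is needed; your measurability remark for the supremum $a_\varepsilon(t)$ is a welcome bit of care.
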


 \begin{lemma}(\cite{[NNT]})\label{lem2.6}
Suppose that $f\in L^{p}(0,T;L^{q}(\mathbb{T}^{d}))$. Then for any $\varepsilon>0$, there holds
\be\label{}
\|\nabla f^{\varepsilon}\|_{L^{p}(0,T;L^{q}(\mathbb{T}^{d}))}
\leq C\varepsilon^{-1}\| f\|_{L^{p}(0,T;L^{q}(\mathbb{T}^{d}))},
\ee
and, if $p,q<\infty$
$$
\limsup_{\varepsilon\rightarrow0} \varepsilon\|\nabla f^{\varepsilon}\|_{L^{p}(0,T;L^{q}(\mathbb{T}^{d}))}=0.
$$

\end{lemma}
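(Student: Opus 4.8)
The plan is to read off both assertions from the convolution identity $\nabla f^{\varepsilon}=f*\nabla\eta_{\varepsilon}$ together with the scaling $\nabla\eta_{\varepsilon}(y)=\varepsilon^{-d-1}(\nabla\eta)(y/\varepsilon)$, which gives $\|\nabla\eta_{\varepsilon}\|_{L^{1}(\mathbb{R}^{d})}=\varepsilon^{-1}\|\nabla\eta\|_{L^{1}(\mathbb{R}^{d})}$. For the first bound I would apply Young's convolution inequality in the space variable for almost every $t$, getting $\|\nabla f^{\varepsilon}(t)\|_{L^{q}(\mathbb{T}^{d})}\leq\|\nabla\eta_{\varepsilon}\|_{L^{1}}\,\|f(t)\|_{L^{q}(\mathbb{T}^{d})}=C\varepsilon^{-1}\|f(t)\|_{L^{q}(\mathbb{T}^{d})}$, and then take the $L^{p}(0,T)$ norm in $t$; this yields the first inequality with $C=\|\nabla\eta\|_{L^{1}(\mathbb{R}^{d})}$.

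For the refined $\limsup$ statement, the essential input is the cancellation $\int_{\mathbb{R}^{d}}\nabla\eta_{\varepsilon}(y)\,dy=0$, which allows me to rewrite
\[
\nabla f^{\varepsilon}(x)=\int_{B_{\varepsilon}(0)}\big(f(x-y)-f(x)\big)\,\nabla\eta_{\varepsilon}(y)\,dy .
\]
Since $|\nabla\eta_{\varepsilon}(y)|\leq C\varepsilon^{-d-1}\mathbf{1}_{\{|y|<\varepsilon\}}$, this gives the pointwise bound $\varepsilon\,|\nabla f^{\varepsilon}(x)|\leq C|B_{\varepsilon}|^{-1}\int_{B_{\varepsilon}(0)}|f(x-y)-f(x)|\,dy$. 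Taking the $L^{p}(0,T;L^{q}(\mathbb{T}^{d}))$ norm and invoking Minkowski's integral inequality (valid because that mixed norm is a genuine norm for $p,q\geq1$), I arrive at
\[
\varepsilon\,\|\nabla f^{\varepsilon}\|_{L^{p}(0,T;L^{q}(\mathbb{T}^{d}))}\leq\frac{C}{|B_{\varepsilon}|}\int_{B_{\varepsilon}(0)}\|f(\cdot-y)-f\|_{L^{p}(0,T;L^{q}(\mathbb{T}^{d}))}\,dy\leq C\sup_{|y|<\varepsilon}\|f(\cdot-y)-f\|_{L^{p}(0,T;L^{q}(\mathbb{T}^{d}))},
\]
where $f(\cdot-y)$ denotes the spatial translate of $f$.

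Finally I would show that the spatial modulus of continuity on the right tends to $0$ as $\varepsilon\to0$: since $q<\infty$, translation is continuous in $L^{q}(\mathbb{T}^{d})$, so $\|f(t,\cdot-y)-f(t,\cdot)\|_{L^{q}}\to0$ as $y\to0$ for almost every $t$, and as this is dominated by $2\|f(t,\cdot)\|_{L^{q}}\in L^{p}(0,T)$ with $p<\infty$, dominated convergence gives $\|f(\cdot-y)-f\|_{L^{p}(0,T;L^{q})}\to0$ as $y\to0$, whence $\sup_{|y|<\varepsilon}\|f(\cdot-y)-f\|_{L^{p}(0,T;L^{q})}\to0$. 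The step I expect to need the most care is precisely this last one — identifying $\varepsilon\nabla f^{\varepsilon}$ with a translation modulus of continuity in the mixed space-time norm and checking that it vanishes — and it is exactly here that the restriction $p,q<\infty$ is essential, since translations are not continuous in $L^{\infty}$.
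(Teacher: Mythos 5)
Your proof is correct and complete: the first bound is the standard Young-inequality estimate using $\|\nabla\eta_{\varepsilon}\|_{L^{1}}=\varepsilon^{-1}\|\nabla\eta\|_{L^{1}}$, and the $\limsup$ statement follows correctly from the cancellation $\int\nabla\eta_{\varepsilon}=0$, Minkowski's integral inequality, continuity of translation in $L^{q}(\mathbb{T}^{d})$ for $q<\infty$, and dominated convergence in time, with the role of $p,q<\infty$ correctly identified. The paper itself gives no proof of this lemma (it is quoted from the reference [NNT]), and your argument is precisely the standard one used there, so there is nothing to compare beyond noting that your write-up is a valid self-contained justification.
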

 \begin{lemma}(\cite{[NNT],[WY]}) \label{lem2.7}   Let $1\leq p,q,p_1,p_2,q_1,q_2\leq \infty$  with $\frac{1}{p}=\frac{1}{p_1}+\frac{1}{p_2}$ and $\frac{1}{q}=\frac{1}{q_1}+\frac{1}{q_2}$. Assume $f\in L^{p_1}(0,T;W^{1,q_1}(\mathbb{T}^d))$ and $g\in L^{p_2}(0,T;L^{q_2}(\mathbb{T}^d))$. Then for any $\varepsilon> 0$, there holds
		\begin{align} \label{fg'}
		\|(fg)^\varepsilon-f^\varepsilon g^\varepsilon\|_{L^p(0,T;L^q(\mathbb{T}^d))}\leq C\varepsilon \|f\|_{L^{p_1}(0,T;W^{1,q_1}( \mathbb{T}^d))}\|g\|_{L^{p_2}(0,T;L^{q_2}(\mathbb{T}^d))}.
		\end{align}
		Moreover, if $p_2,q_2<\infty$ then
		\begin{align}\label{limite'}
		\limsup_{\varepsilon \to 0}\varepsilon^{-1} \|(fg)^\varepsilon-f^\varepsilon g^\varepsilon\|_{L^p(0,T;L^q(\mathbb{T}^d))}=0.
		\end{align}
	\end{lemma}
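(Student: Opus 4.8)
The plan is to reduce everything to a double-mollification form of the Constantin--E--Titi commutator and then to separate two scales of cancellation: the first estimate \eqref{fg'} follows from a crude increment bound placing the single available derivative on $f$, while the refined statement \eqref{limite'} requires an exact cancellation combined with the continuity of translations in $L^{q_2}$. Throughout I would use that on $\mathbb{T}^d$ the $L^{q_1}$ and $L^{q_2}$ norms are translation invariant and, when $q_2<\infty$, translation acts continuously on $L^{q_2}$.

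First I would record the representation. Since $\int\eta_\varepsilon=1$, writing both $(fg)^\varepsilon$ and $f^\varepsilon g^\varepsilon$ as double integrals against $\eta_\varepsilon(y)\eta_\varepsilon(z)$ and subtracting gives
\[
(fg)^\varepsilon(x)-f^\varepsilon(x)g^\varepsilon(x)=\iint \eta_\varepsilon(y)\eta_\varepsilon(z)\,[f(x-y)-f(x-z)]\,g(x-y)\,dy\,dz.
\]
For \eqref{fg'} I would bound the inner difference by the fundamental theorem of calculus, $|f(x-y)-f(x-z)|\le|y-z|\int_0^1|\nabla f(x-z-s(y-z))|\,ds\le 2\varepsilon\int_0^1|\nabla f|\,ds$, then apply Minkowski's integral inequality followed by H\"older in $x$ with $\tfrac1q=\tfrac1{q_1}+\tfrac1{q_2}$. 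Translation invariance turns $\|g(\cdot-y)\|_{L^{q_2}}$ into $\|g\|_{L^{q_2}}$ and $\|f(\cdot-y)-f(\cdot-z)\|_{L^{q_1}}$ into a bound $2\varepsilon\|\nabla f\|_{L^{q_1}}$, yielding the pointwise-in-time estimate $\|(fg)^\varepsilon-f^\varepsilon g^\varepsilon\|_{L^q_x}\le C\varepsilon\|\nabla f\|_{L^{q_1}}\|g\|_{L^{q_2}}$. A final H\"older in time with $\tfrac1p=\tfrac1{p_1}+\tfrac1{p_2}$ closes \eqref{fg'}.

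The main point, and the chief obstacle, is the little-o refinement \eqref{limite'}: the crude bound only gives $O(\varepsilon)$, so dividing by $\varepsilon$ leaves an $O(1)$ quantity. To gain the extra decay I would exploit the exact cancellation $\iint\eta_\varepsilon(y)\eta_\varepsilon(z)[f(x-y)-f(x-z)]\,dy\,dz=f^\varepsilon(x)-f^\varepsilon(x)=0$, which shows that subtracting the factor $g(x)$ (constant in $(y,z)$) inside the representation costs nothing and replaces $g(x-y)$ by the increment $g(x-y)-g(x)$:
\[
(fg)^\varepsilon-f^\varepsilon g^\varepsilon=\iint\eta_\varepsilon(y)\eta_\varepsilon(z)[f(x-y)-f(x-z)][g(x-y)-g(x)]\,dy\,dz.
\]
Bounding the $f$-increment by $2\varepsilon\int_0^1|\nabla f|\,ds$ as before, applying H\"older and translation invariance gives, at fixed time,
\[
\varepsilon^{-1}\|(fg)^\varepsilon-f^\varepsilon g^\varepsilon\|_{L^q_x}\le C\|\nabla f\|_{L^{q_1}}\int\eta_\varepsilon(y)\,\|g(\cdot-y)-g\|_{L^{q_2}}\,dy.
\]

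To finish I would pass to the limit. Because $q_2<\infty$, translation is continuous on $L^{q_2}$, so the $y$-integral above is dominated by the modulus of continuity $\omega_g(\varepsilon;t)=\sup_{|y|\le\varepsilon}\|g(t,\cdot-y)-g(t,\cdot)\|_{L^{q_2}}$, which tends to $0$ as $\varepsilon\to0$ for a.e.\ $t$. Taking the $L^p_t$ norm via H\"older in time with $\tfrac1p=\tfrac1{p_1}+\tfrac1{p_2}$ leaves a factor $\|\nabla f\|_{L^{p_1}(L^{q_1})}$ (finite even if $p_1,q_1=\infty$) times the $L^{p_2}_t$ norm of a quantity that is pointwise bounded by $2\|g(t)\|_{L^{q_2}}\in L^{p_2}_t$ and vanishes a.e.; dominated convergence, legitimate precisely because $p_2<\infty$, then forces $\varepsilon^{-1}\|(fg)^\varepsilon-f^\varepsilon g^\varepsilon\|_{L^p(L^q)}\to0$. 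The delicate step is this last interchange of the limit with the time integration, which is exactly where the finiteness of both $p_2$ and $q_2$ enters.
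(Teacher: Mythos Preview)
The paper does not actually prove this lemma: it is merely stated and attributed to \cite{[NNT],[WY]}. So there is no in-paper argument to compare against directly. Your proof is correct and self-contained.

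It is worth noting, however, that the decomposition you use differs from the one the paper employs for the closely related Lemma~\ref{lem2.3}. There (and in the original \cite{[CET]} approach) one writes the single-integral identity
\[
(fg)^\varepsilon-f^\varepsilon g^\varepsilon=\int\eta_\varepsilon(y)[f(\cdot-y)-f][g(\cdot-y)-g]\,dy-(f-f^\varepsilon)(g-g^\varepsilon),
\]
and then bounds the two pieces separately: the first by $\|f(\cdot-y)-f\|_{L^{q_1}}\le|y|\|\nabla f\|_{L^{q_1}}$ together with $\|g(\cdot-y)-g\|_{L^{q_2}}$, and the second by $\|f-f^\varepsilon\|_{L^{q_1}}\le C\varepsilon\|\nabla f\|_{L^{q_1}}$ together with $\|g-g^\varepsilon\|_{L^{q_2}}$. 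Both pieces are $O(\varepsilon)$ and become $o(\varepsilon)$ once $q_2<\infty$ (continuity of translation) and $p_2<\infty$ (dominated convergence in time). Your double-integral representation
\[
(fg)^\varepsilon-f^\varepsilon g^\varepsilon=\iint\eta_\varepsilon(y)\eta_\varepsilon(z)[f(\cdot-y)-f(\cdot-z)]g(\cdot-y)\,dy\,dz
\]
is a clean alternative: it avoids splitting into two terms, and the cancellation trick (subtracting $g(x)$ for free) packages the little-$o$ mechanism into a single estimate. The trade-off is that the standard CET identity is the form already available in the paper (equation \eqref{CETI}) and in \cite{[NNT]}, so a reader cross-referencing those sources would more immediately recognize that route. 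Either argument is perfectly adequate here.
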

Finally, for the convenience of readers, we state the definitions of weak solutions for both the inviscid E-MHD equations \eqref{E-MHD} and H-MHD equations \eqref{hallMHD}, respectively. To do this, we list some identities first.  For any vectors $\vec{A}$ and $\vec{B}$, there hold that
\be\ba\label{VI}
&\nabla(\vec{A}\cdot \vec{B})=\vec{A}\cdot\nabla \vec{B}+\vec{B}\cdot\nabla \vec{A}+\vec{A}\times(\nabla \times\vec{B})+\vec{B}\times(\nabla\times\vec{A}),\\
& \nabla\times(\vec{A}\times \vec{B})=\vec{A} \text{div} \vec{B}-\vec{B} \text{div}  \vec{A}+\vec{B}\cdot\nabla \vec{A}-\vec{A}\cdot\nabla \vec{B},\\
& \Div (\nabla \times \vec{A})=0,\ \nabla \times (\nabla \vec{B})=0,
\ea\ee
which  together with the divergence-free condition $\Div u= \Div h=0$
allow us to get
\be\label{identity}\ba
&h\cdot\nabla h = \f{1}{2}\nabla |h|^{2}+\vec{j}\times h,\ \text{with} \ \vec{j}=\nabla \times h,\\
&\nabla\times( h\times u)=u\cdot\nabla h-h\cdot\nabla u,\ea\ee
which means that
\begin{align}
	\label{non2}&\nabla\times\B[(\nabla\times h)\times h\B]= \nabla\times[\vec{j}\times h]=\nabla\times\B[\text{div}(h\otimes h)-\nabla\f12|h|^{2}\B]=\nabla\times\B[\text{div}(h\otimes h)\B].
\end{align}
Hence, we get a equivalent form  of the inviscid E-MHD equation \eqref{E-MHD} as
\begin{align}
	\label{eq2}
	& h_{t}+\nabla\times\B[\text{div}(h\otimes h)\B]=0.
\end{align}
Moreover, we rewrite magnetic   potential equation \eqref{1.8}  as
\be\label{dengjiahp1}
H_{t}+\text{div}(h\otimes h)+\nabla p=0.
\ee
Thanks to $\eqref{VI}_2$ and $\eqref{non2}$, we reformulate $\eqref{hallMHD}_{2}$ in the Hall-MHD equations as
\be\label{dengjiaE-MHD1}
h_{t}+\nabla\times( h\times u)+\nabla\times\B[\text{div}(h\otimes h)\B]=0,
\ee
which implies its  magnetic potential equations
\be\label{12}
H_{t}+h\times u+\text{div}(h\otimes h)+\nabla p=0.
\ee
Based on this, we present the definitions of weak solutions for the incompressible inviscid H-MHD equations  and  its magnetic potential equations \eqref{12}. The definitions of weak solutions for incompressible inviscid E-MHD equations  and  its magnetic potential equations  can be given in a similar way, hence we omit the details.

\begin{definition}[Weak solutions of incompressible inviscid H-MHD equations]   \label{weaksolu}
	The pair  $(u, b) \in C_{{\rm weak}}(0,T; L^2(\mathbb{T}^d))$ with initial data $u_0, b_0\in L^2(\mathbb{T}^d)$ is a weak solution to the incompressible inviscid H-MHD equations \eqref{hallMHD}  if	
	\begin{enumerate}[(1)]
		\item For all $t\in [0,T]$, $(u(t,x), h(t,x))$ is divergence-free in the sense of distributions, namely,
		$$\int_{0}^{T}\int_{\mathbb{T}^d}u\cdot\nabla \phi dx=0,\quad\int_{0}^{T}\int_{\mathbb{T}^d}h\cdot\nabla \phi dx=0,$$
		for any smooth function $\phi(x,t)\in C_0^{\infty} ([0,T]\times\mathbb{T}^d)$
		\item Equations  hold in the sense of distributions, i.e.,
		for any divergence-free test vector field $\varphi  \in C_0^{\infty} ((0,T) \times \mathbb{T}^d)$,
		\begin{align*}
			&\int_0^T \int_{\mathbb{T}^d}  \partial_t \varphi \cdot u    + \nabla\varphi :( u \otimes u - h \otimes h) d x d t =0,
			\\
			&\int_0^T	\int_{\mathbb{T}^d}  \partial_t \varphi \cdot h   + \nabla\varphi  :(h \otimes u- u \otimes h)+(h \otimes h):\nabla\nabla\times\varphi d x dt  = 0 .
		\end{align*}
	\end{enumerate}
\end{definition}
Then we can define the weak solution of magnetic potential  equations \eqref{12} as follows.
\begin{definition}[Weak solutions of the magnetic potential  equations for inviscid H-MHD equations]\label{eulerdefi2}
	$H\in C_{{\rm weak}}(0,T;L^2(\mathbb{T}^d))$ with initial data in $L^2(\mathbb{T}^d)$ is called  a weak solution of  equation \eqref{12}, if
	for any divergence-free test vector field $\varphi\in C_{0}^{\infty}((0,T)\times\mathbb{T}^d)$, there holds
	$$
	\int_{0}^{T}\int_{\mathbb{T}^d}H\cdot\partial_{t}\varphi+(u\times h)\varphi+(h\otimes h):\nabla\varphi dxdt=0.
	$$		
\end{definition}

\section{ Two conserved quantities in the inviscid E-MHD equations}
In this section, we commence the proof of the energy and maganetic helicity conservation for the inviscid E-MHD equations \eqref{E-MHD}.
 First, we show the energy conservation of weak solutions for the E-MHD equations.
\begin{proof}[Proof of Theorem \ref{the1.1}]
	In what follows, for the sake of simplicity, we assume that   the weak solution $h$ of the inviscid E-MHD equations \eqref{E-MHD} is differential in time and only space mollification is applied.  This ensures that  the weak solution $h$ satisfies
	\be\label{p2}
	h_{t}^{\varepsilon} +\nabla\times\B[(\nabla\times h)\times h\B]^{\varepsilon}=0,
	\ee
	which holds pointwise in space and time derivative is a classical derivative at Lebesgue almost all times. For the rigorous argument, we refer the reader to \cite[Section 2. p.741-p.744]{[DE]} by Drivas and Eyink.
 Multiplying $\eqref{p2}$ by $h^\varepsilon$, then integrating it   over $(0,T)\times \mathbb{T}^3$, we have
\be\label{key3.2}\int_0^T\int_{\mathbb{T}^{d}} \f{1}{2}\f{d}{dt}|h^{\varepsilon}|^2dxdt+\int_0^T\int_{\mathbb{T}^d}\nabla\times\B[(\nabla\times h)\times h\B]^{\varepsilon}h^{\varepsilon}dxdt=0.\ee
To obtain the energy balance of weak solutions, we need to take the limits on both sides of equation \eqref{key3.2} as $\varepsilon \to 0$. For the convenience of proof, we denote the second term on the left-hand side of \eqref{key3.2} by $L$.
First, in light of  \eqref{non2} and the integration by parts, we know that
\begin{equation}\label{c2}
	\begin{aligned}
		&\int_0^T\int_{\mathbb{T}^d} \nabla\times\B[(\nabla\times h)\times h\B]^{\varepsilon}h^{\varepsilon}dxdt\\
		=&	\int_0^T\int_{\mathbb{T}^{d}}\nabla\times\B[\text{div}(h\otimes h)\B]^{\varepsilon}h^{\varepsilon}dxdt	\\
		=&-\int_0^T\int_{\mathbb{T}^{d}}\B[  (h\otimes h)^{\varepsilon}-(h^{\varepsilon}\otimes h^{\varepsilon}) \B]\nabla(\nabla\times h^{\varepsilon})dxdt-\int_0^T\int_{\mathbb{T}^d} (h^\varepsilon\otimes h^{\varepsilon})\nabla(\nabla\times h^{\varepsilon})dxdt\\
		=&-\int_0^T\int_{\mathbb{T}^{d}}\B[  (h\otimes h)^{\varepsilon}-(h^{\varepsilon}\otimes h^{\varepsilon}) \B]\nabla(\nabla\times h^{\varepsilon})dxdt,
	\end{aligned}
\end{equation}
where we have used the divergence-free condition and the following facts
\begin{equation}\label{c3}
	\begin{aligned}
		\int_0^T\int_{\mathbb{T}^{d}}(h^{\varepsilon} \otimes   h^{\varepsilon} )\nabla(\nabla\times h^{\varepsilon})dxdt=&-\int_0^T\int_{\mathbb{T}^{d}}\B[\text{div}(h^{\varepsilon}\otimes h^{\varepsilon})-\nabla \f12|h^{\varepsilon}|^{2} \B]\nabla\times h^{\varepsilon}dxdt\\
		=&-\int_0^T\int_{\mathbb{T}^{d}}\B[(\nabla\times h^{\varepsilon})\times h^{\varepsilon}\B]\nabla\times h^{\varepsilon}dxdt\\
		=&-\int_0^T\int_{\mathbb{T}^{d}}\B[\vec{j}^\varepsilon\times h^{\varepsilon}\B]\cdot \vec{j}^{\varepsilon}dxdt=0,		
	\end{aligned}
\end{equation}
and \begin{equation}
	\vec{A}\times \vec{B}\cdot \vec{A}=\vec{A}\times \vec{A}\cdot \vec{B}=0, \ \text{for\ any\ vectors}\  \vec{A}\ \text{and}\ \vec{B}.
\end{equation}
Then, combining \eqref{c2} and \eqref{key3.2}, we have
\be\ba\label{3.8}
&\int_0^T\int_{\mathbb{T}^{d}}\f{1}{2}\f{d}{dt}|h^\varepsilon|^2dxdt-\int_0^T\int_{\mathbb{T}^{d}}\B[  (h\otimes h)^{\varepsilon}-(h^{\varepsilon}\otimes h^{\varepsilon}) \B]\nabla(\nabla\times h^{\varepsilon})dxdt=0.
\ea\ee
 Now, we are in a position to show that the   term $L$ tends to zero as $\varepsilon\to 0$. Indeed, thanks to the H\"older inequality, we see that
\begin{equation}\label{c4}
|L|\leq C   \| (h^{\varepsilon}\otimes h^{\varepsilon})- (h\otimes h)^{\varepsilon} \|_{L^{\f{3}{2}}(0,T;L^{\f{3}{2}}(\mathbb{T}^d))}\|\nabla(\nabla\times h^{\varepsilon})\|_{L^{3}(0,T;L^{3}(\mathbb{T}^d))}
\end{equation}
Since $h\in L^{3}(0,T;\underline{B}^{\f23}_{3,VMO}(\mathbb{T}^d))$, then applying Lemma  \ref{lem2.3}, we observe that, as $\varepsilon\rightarrow0$,
\begin{equation}\label{c5}
\| (h^{\varepsilon}\otimes h^{\varepsilon})- (h\otimes h)^{\varepsilon} \|_{L^{\f{3}{2}}(0,T;L^{\f{3}{2}}(\mathbb{T}^d))}\leq o(\varepsilon^{\f{4}{3} }).
\end{equation}
Furthermore, it follows from Lemma \ref{lem2.1} that, as $\varepsilon\rightarrow0$,
\begin{equation}\label{c6}\|\nabla(\nabla\times h^{\varepsilon})\|_{L^{3}(0,T;L^{3}(\mathbb{T}^d))}\leq o(\varepsilon^{-\f{4}{3}}).\end{equation}
As a consequence, combining \eqref{c4}, \eqref{c5} and \eqref{c6}, we have
\be\label{3.3}
|L|=\B|\int_0^T\int_{\mathbb{T}^{d}}\nabla\times\B[(\nabla\times h)\times h\B]^{\varepsilon}h^{\varepsilon}dxdt
\B|\leq o(1).
\ee
Thus, passing to the limit of $\varepsilon$ in \eqref{key3.2}, we conclude the desired energy conservation.

(2)
Recalling \eqref{key3.2}, the second term $L$ can be rewriten as
\begin{equation}\label{c7}\ba
\int_0^T\int_{\mathbb{T}^{d}}\nabla\times\B[(\nabla\times h)\times h\B]^{\varepsilon}h^{\varepsilon}dxdt
=&\int_0^T\int_{\mathbb{T}^{d}}\nabla\times(\vec{j}\times h)^{\varepsilon}h^{\varepsilon}dxdt\\
=&\int_0^T\int_{\mathbb{T}^{d}}(\vec{j}\times h)^{\varepsilon}\nabla\times h^{\varepsilon}dxdt.
\ea\end{equation}
Notice that
\begin{equation}\label{c8}\int_0^T\int_{\mathbb{T}^{d}}(\vec{j}^{\varepsilon}\times h^{\varepsilon})\nabla\times h^{\varepsilon}dxdt=\int_0^T\int_{\mathbb{T}^{d}}(\vec{j}^{\varepsilon}\times h^{\varepsilon})\cdot\vec{j}^{\varepsilon}dxdt=0.
\end{equation}
Hence, combining \eqref{c7} and \eqref{c8}, we get
\begin{equation}\label{c9}\ba
L=\int_0^T\int_{\mathbb{T}^{d}}\nabla\times\B[(\nabla\times h)\times h\B]^{\varepsilon}h^{\varepsilon}dxdt
=\int_0^T\int_{\mathbb{T}^{d}}\B[(\vec{j}\times h)^{\varepsilon}-(\vec{j}^{\varepsilon}\times h^{\varepsilon})\B]\nabla\times h^{\varepsilon}dxdt.
\ea\end{equation}
Now, we need to show the term $L$ tends to zero as $\varepsilon \to 0$. First, by virtue of the H\"older inequality, we discover that
\begin{equation}\label{c10}\ba
&\B|\int_0^T\int_{\mathbb{T}^d}\nabla\times\B[(\nabla\times h)\times h\B]^{\varepsilon}h^{\varepsilon}dxdt
\B|\\
\leq& \|(\vec{j}\times h)^{\varepsilon}-(\vec{j}^{\varepsilon}\times h^{\varepsilon})\|_{L^{\f{r_1 r_2}{r_1+r_2}}(0,T;L^{\f{pq}{p+q}}(\mathbb{T}^d))}
\|\nabla\times h^{\varepsilon}\|_{L^{r_{2}}(0,T;L^{q}(\mathbb{T}^d))},
\ea\end{equation}
where we need to require that $\f{1}{r_1}+\f{2}{r_2}=1$ and $\f{1}{p}+\f{2}{q}=1$.
On the other hand, by means of Lemma \ref{lem2.3}, we deduce from $\vec{j}\in L^{r_{1}}(0,T;\underline{B}^{\alpha}_{p,VMO}(\mathbb{T}^d)),h\in L^{r_{2}}(0,T;B^{\beta}_{q,\infty}(\mathbb{T}^d))$ that
\begin{equation}\label{c11}\|(\vec{j}\times h)^{\varepsilon}-(\vec{j}^{\varepsilon}\times h^{\varepsilon})\|_{L^{\f{r_1 r_2}{r_1+r_2}}(0,T;L^{\f{pq}{p+q}}(\mathbb{T}^d))}\leq o(\varepsilon^{\alpha+\beta}), \ \text{as}\ \varepsilon\to 0.
\end{equation}
Moreover, letting $\varepsilon\to 0$ and applying  Lemma \ref{lem2.1}, we deduce that
\begin{equation}\label{c12}
\|\nabla\times h^{\varepsilon}\|_{L^{r_{2}}(0,T;L^{q}(\mathbb{T}^d))}\leq O(\varepsilon^{\beta-1}).
\end{equation}
Then substituting  \eqref{c11} and \eqref{c12} into \eqref{c10},  we can obtain
$$|L|=\B|\int_0^T\int_{\mathbb{T}^{d}}\nabla\times\B[(\nabla\times h)\times h\B]^{\varepsilon}h^{\varepsilon}dxdt
\B| \leq o(\varepsilon^{\alpha+2\beta-1}),\ \text{as}\ \varepsilon\to 0.
$$
Therefore, a combination of $\alpha+2\beta-1\geq0$ and \eqref{key3.2} yields the desired results.

(3)  A slight modified the  proof of (2) allows us to complete the proof of this part. We omit the details here.

(4) In the same manner of derivation of \eqref{c2}, due to $\vec{j}=\nabla \times h$, we get
\begin{equation}\label{c13}\ba
\int_0^T\int_{\mathbb{T}^{d}}\nabla\times\B[(\nabla\times h)\times h\B]^{\varepsilon}h^{\varepsilon}dxdt
=\int_0^T\int_{\mathbb{T}^{d}}\B[ (h^{\varepsilon}\otimes h^{\varepsilon})- (h\otimes h)^{\varepsilon} \B]\nabla \vec{j}^{\varepsilon} dxdt.
\ea\end{equation}
Then in the light of the H\"older inequality, we arrive at
\begin{equation}\label{c14}\ba
&\B|\int_0^T\int_{\mathbb{T}^{d}}\nabla\times\B[(\nabla\times h)\times h\B]^{\varepsilon}h^{\varepsilon}dxdt
\B|\\
\leq& \| (h^{\varepsilon}\otimes h^{\varepsilon})- (h\otimes h)^{\varepsilon} \|_{L^{\f32}(0,T;L^{\f{3d}{2d-2}}(\mathbb{T}^{d}))}\|\nabla \vec{j}^{\varepsilon} \|_{L^{3}(0,T;L^{\f{3d}{d+2}} (\mathbb{T}^{d}))}.
\ea\end{equation}
With the help of (3) in Lemma   \ref{lem2.5} and $\vec{j}\in L^{3}(0,T;B^{\f13}_{\f{3d}{d+2},c(\mathbb{N})}(\mathbb{T}^d))$,  we infer that, as $\varepsilon\rightarrow0$,
$$\| (h^{\varepsilon}\otimes h^{\varepsilon})- (h\otimes h)^{\varepsilon} \|_{L^{\f32}(0,T;L^{\f{3d}{2d-2}}(\mathbb{T}^{d}))}\leq o(\varepsilon^{\f{2}{3}}).
$$
Moreover, by means of  Lemma   \ref{lem2.1}, we know that, as $\varepsilon\rightarrow0$,
$$\|\nabla \vec{j}^{\varepsilon} \|_{L^{3}(0,T;L^{\f{3d}{d+2}} (\mathbb{T}^{d}))}\leq o(\varepsilon^{ -\f{2}{3}}),$$
which turns out that
$$
\B|\int_0^T\int_{\mathbb{T}^{d}}\nabla\times\B[(\nabla\times h)\times h\B]^{\varepsilon}h^{\varepsilon}dxdt
\B|\leq o(1).
$$
Consequently, we immediately finish the proof of this part by $\vec{j}\in L^{3}(0,T;B^{\f13}_{\f{3d}{d+2},c(\mathbb{N})}(\mathbb{T}^d))$.

\end{proof}
Next, we turn our attentions to the proof of magnetic helicity balance for the inviscid E-MHD equations \eqref{E-MHD}.
\begin{proof}[Proof of Theorem \ref{the1.2}]
Recalling    the magnetic   potential equation for
E-MHD  equations \eqref{dengjiahp1} that
\be\label{mpeforE-MHD}
H_{t} +\text{div}(h\otimes h)  +\nabla p=0,
\ee
where $H=\text{curl}^{-1}h$.
Mollifying the equations \eqref{mpeforE-MHD} and \eqref{E-MHD}  in spatial direction, we observe that
$$\ba
H_{t}^{\varepsilon}  +\text{div}(h\otimes h)^{\varepsilon}  +\nabla p^{\varepsilon}  =0,\\
h_{t}^{\varepsilon}  +\nabla\times\B[\text{div}(h\otimes h)\B]^{\varepsilon} =0.
\ea$$
Then it follows from a straightforward computation that
\begin{align}\label{3.6}
\f{d}{dt}\int_{\mathbb{T}^3}h^{\varepsilon}\cdot H^{\varepsilon}dx=&
\int_{\mathbb{T}^3}h_{t}^{\varepsilon}\cdot H^{\varepsilon}dx+
\int_{\mathbb{T}^3}h^{\varepsilon}\cdot H_{t}^{\varepsilon}dx\nonumber\\
=&-\int_{\mathbb{T}^3}  \nabla\times\B[\text{div}(h\otimes h)\B]^{\varepsilon} \cdot H^{\varepsilon}dx -
\int_{\mathbb{T}^3}h^{\varepsilon}\cdot \B[ \text{div}(h\otimes h)^{\varepsilon}   +\nabla p^{\varepsilon}\B]dx\nonumber\\
=&-2\int_{\mathbb{T}^3}\B[\text{div}(h\otimes h)\B]^{\varepsilon}h^{\varepsilon}dx\nonumber\\
=&-2\int_{\mathbb{T}^3}\B[\text{div}(h\otimes h)^{\varepsilon}-\Div(h^\varepsilon\otimes h^\varepsilon)\B]h^{\varepsilon}dx\nonumber\\
=&2\int_{\mathbb{T}^3}\B[(h\otimes h)^{\varepsilon}-(h^\varepsilon\otimes h^\varepsilon)\B]\nabla h^{\varepsilon}dx,
\end{align}
where in the fourth equality we have used the divergence-free condition that $\Div h=0$.
Hence, integrating the above equation over $[0,T]$, we know that
\begin{equation}\label{03.6}
 \int_{\mathbb{T}^3} h ^{\varepsilon}(x,T)\cdot H^{\varepsilon}(x,T) dx -\int_{\mathbb{T}^3} h ^{\varepsilon}(x,0)\cdot H^{\varepsilon}(x,0) dx=2\int_0^T\int_{\mathbb{T}^3}\B[(h\otimes h)^{\varepsilon}-(h^\varepsilon\otimes h^\varepsilon)\B]\nabla h^{\varepsilon}dxdt.
\end{equation}
For the convenience of computation, we denote the term on the right-hand side of above equation \eqref{03.6} by $R$. Consequently, to obtain the desired magnetic helicity equality, we need to show that the limit of term $R$ is zero as $\varepsilon\to 0$.
 Indeed, by the H\"older inequality, we have
\begin{equation}\label{c18}\ba
|R|
= &\B|2\int_{0}^{T}\int_{\mathbb{T}^3}\B[ (h\otimes h)^{\varepsilon}-h^{\varepsilon}\otimes h^{\varepsilon}\B]\nabla h^{\varepsilon}dxdt\B|\\
\leq&C\|(h\otimes h)^{\varepsilon}-h^{\varepsilon}\otimes h^{\varepsilon}\|_{L^{\f32}(0,T;L^{\f32}(\mathbb{T}^{3}))}\|\nabla h^{\varepsilon}\|_{L^{3}(0,T;L^{3}(\mathbb{T}^{3}))}.
\ea\end{equation}
Since $h\in L^{3}(0,T;\underline{B}^{\f13}_{3,VMO}(\mathbb{T}^3)),$ then aplying  Lemma \ref{lem2.3} we have, as $\varepsilon
\to0$,
$$\|(h\otimes h)^{\varepsilon}-h^{\varepsilon}\otimes h^{\varepsilon}\|_{L^{\f32}(0,T;L^{\f32}(\mathbb{T}^{3}))}\leq o(\varepsilon^{\f{2}{3}})
$$
On the other hand, by virtue  of Lemma \ref{lem2.1}, as $\varepsilon\to 0$,  we find
$$\|\nabla h^{\varepsilon}\|_{L^{3}(0,T;L^{3}(\mathbb{T}^{3}))}\leq o(\varepsilon^{-\f{2}{3}})$$
Hence, letting $\varepsilon\to 0$, we conclude that
\be\label{0.37}
|R|
= \B|2\int_{0}^{T}\int_{\mathbb{T}^3}\B[ (h\otimes h)^{\varepsilon}-h^{\varepsilon}\otimes h^{\varepsilon}\B]\nabla h^{\varepsilon}dxdt\B|\leq o(1).
\ee
Thus, taking the limits in \eqref{03.6} as  $\varepsilon\rightarrow0$  and using \eqref{0.37}, we finish the proof of this theorem.

\end{proof}

 \section{Energy and magnetic helicity conservation for the inviscid  H-MHD system}
 In this section, we are concerned with the energy and magnetic helicity conservation for the inviscid H-MHD equations.
\begin{proof}[Proof of Theorem \ref{energythe1.1}]
  Mollifying the equations \eqref{hallMHD}  in spatial direction (see the notations in Section 2), we have
\be\ba\label{rmhd}
&\partial_{t}{u^{\varepsilon}} +  \text{div}(u\otimes u)^{\varepsilon} - \text{div}(h\otimes h)^{\varepsilon} +\nabla\Pi^{\varepsilon}= 0,  \\
&\partial_{t}{h^{\varepsilon}} + \text{div}(u\otimes h)^{\varepsilon} - \text{div}(h\otimes u)^{\varepsilon}+\nabla\times\B[(\nabla\times h)\times h\B]^{\varepsilon}  = 0. \\
\ea\ee
Multiplying $\eqref{rmhd}_1$ by $u^\varepsilon$ and $\eqref{rmhd}_2$ by $h^\varepsilon$ respectively, then  integrating them over $[0,T]\times \mathbb{T}^d$, we have
\begin{equation}\ba\label{4.2}
& \f12\|u^{\varepsilon}(T)\|^{2}_{L^{2}(\mathbb{T}^d)}
+\f12\|h^{\varepsilon}(T)\|^{2}_{L^{2}(\mathbb{T}^d)}
-\f12\|u^{\varepsilon}(0)\|^{2}_{L^{2}(\mathbb{T}^d)}
-\f12\|h^{\varepsilon}(0)\|^{2}_{L^{2}(\mathbb{T}^d)}\\=&-\int_{0}^{T}\int_{\mathbb{T}^d}
\text{div}(u\otimes u)^{\varepsilon}u^{\varepsilon}-\text{div}(h\otimes h)^{\varepsilon}u^{\varepsilon}+ \text{div}(\Pi u^{\varepsilon})
+\text{div}(u\otimes h)^{\varepsilon}h^{\varepsilon}\\&-\text{div}(h \otimes u)^{\varepsilon} h^{\varepsilon}+\nabla\times\B[(\nabla\times h)\times h\B]^{\varepsilon}h^{\varepsilon}  dxdt\\=&-\int_{0}^{T}\int_{\mathbb{T}^d}
\text{div}(u\otimes u)^{\varepsilon}u^{\varepsilon}-\text{div}(h\otimes h)^{\varepsilon}u^{\varepsilon}
+\text{div}(u\otimes h)^{\varepsilon}h^{\varepsilon}\\&-\text{div}(h \otimes u)^{\varepsilon} h^{\varepsilon}+\nabla\times\B[\text{div}(h\otimes h)\B]^{\varepsilon}h^{\varepsilon}  dxdt\\
=&\,I+II+III+IV+V,
\ea\end{equation}
where the divergence-free condition and \eqref{non2} have been used.

First, by means of divergence-free condition and the integration by parts, the term $I$ can be rewritten as
\begin{equation}\label{c22}\ba
I
=&-\int_{\mathbb{T}^d}
\text{div}\big((u\otimes u)^{\varepsilon}-
(u^{\varepsilon}\otimes u^{\varepsilon})\big)u^{\varepsilon}dxdt-
\int_{\mathbb{T}^d}\text{div}(u^{\varepsilon}\otimes u^{\varepsilon})u^{\varepsilon}dxdt\\
=&-\int_{\mathbb{T}^d}
\text{div}\big((u\otimes u)^{\varepsilon}-
(u^{\varepsilon}\otimes u^{\varepsilon})\big)u^{\varepsilon}dxdt\\
=&\int_{\mathbb{T}^d}
\big((u\otimes u)^{\varepsilon}-
(u^{\varepsilon}\otimes u^{\varepsilon})\big)\nabla u^{\varepsilon}dxdt.
\ea\end{equation}
 Likewise, for the second term $II$, the integration by parts helps us to get
\begin{equation}\label{c23}\ba
	II=& \int_{0}^{T}\int_{\mathbb{T}^d}
	\text{div} \big((h\otimes h)^{\varepsilon}- (h^{\varepsilon}\otimes h^{\varepsilon})\big)u^{\varepsilon} dxdt+\int_{0}^{T} \int_{\mathbb{T}^d} \text{div} (h^{\varepsilon}\otimes h^{\varepsilon})u^{\varepsilon}dxdt\\
	& =-\int_{0}^{T}\int_{\mathbb{T}^d}
	\big((h\otimes h)^{\varepsilon}- (h^{\varepsilon}\otimes h^{\varepsilon})\big)\nabla u^{\varepsilon} dxdt+\int_{0}^{T} \int_{\mathbb{T}^d} \text{div} (h^{\varepsilon}\otimes h^{\varepsilon})u^{\varepsilon}dxdt\\
	=&\,II_{1}+II_{2}.
	\ea\end{equation}
We will show that the terms $II_{2}$ can be   canceled  in term $IV$. Indeed,
in light of the integration by parts and $\text{div}\,h=0$, we arrive at
\begin{equation}\label{c24}\ba
	IV=& \int_{0}^{T}\int_{\mathbb{T}^d}
	\text{div} \big((h\otimes u)^{\varepsilon}-(h^{\varepsilon}\otimes u^{\varepsilon})\big)h^{\varepsilon}
	dxdt+\int_{0}^{T}\int_{\mathbb{T}^d}\text{div} (h^{\varepsilon}\otimes u^{\varepsilon})h^{\varepsilon}dxdt
	\\
	=&-\int_{\mathbb{T}^d}
	\big((h\otimes u)^{\varepsilon}-(h^{\varepsilon}\otimes u^{\varepsilon})\big)\nabla h^{\varepsilon}
	dxdt-\int_{0}^{T}\int_{\mathbb{T}^d} h^\varepsilon\cdot\nabla h^\varepsilon\cdot  u^\varepsilon dxdt\\
	=&-\int_{0}^{T}\int_{\mathbb{T}^d}
	\big((h\otimes u)^{\varepsilon}-(h^{\varepsilon}\otimes u^{\varepsilon})\big)\nabla h^{\varepsilon}
	dxdt-\int_{0}^{T}\int_{\mathbb{T}^d} \text{div}  ( h^\varepsilon \otimes h^\varepsilon) u^\varepsilon dxdt\\=&\,IV_{1}-II_{2}. \ea\end{equation}
Next, by the same token, we can control the term $III$ as
\begin{equation}\label{c25}
	\begin{aligned}
		III=&-\int_{0}^{T} \int_{\mathbb{T}^d}
		\text{div} \big((u\otimes h)^{\varepsilon}-(u^{\varepsilon } \otimes h^{\varepsilon})\big)h^{\varepsilon}dxdt-
		\int_{0}^{T} \int_{\mathbb{T}^d}\text{div} (u^{\varepsilon } \otimes h^{\varepsilon})h^{\varepsilon}dxdt\\
		=&\int_{0}^{T} \int_{\mathbb{T}^d}
		\big((u\otimes h)^{\varepsilon}-(u^{\varepsilon } \otimes h^{\varepsilon})\big)\nabla h^{\varepsilon}dxdt.
	\end{aligned}
\end{equation}
Finally,  for the  term $V$, recalling \eqref{c2}, we can obtain
\begin{equation}\label{c26}
	\begin{aligned}
		V
		=&	\int_0^T\int_{\mathbb{T}^{d}}\nabla\times\B[\text{div}(h\otimes h)\B]^{\varepsilon}h^{\varepsilon}dxdt	\\
		=&-\int_0^T\int_{\mathbb{T}^{d}}\B[  (h\otimes h)^{\varepsilon}-(h^{\varepsilon}\otimes h^{\varepsilon}) \B]\nabla(\nabla\times h^{\varepsilon})dxdt.
	\end{aligned}
\end{equation}
Then substrituting \eqref{c22}-\eqref{c26} into \eqref{4.2}, we have
\begin{equation}\label{c27}
	\begin{aligned}
	& \f12\|u^{\varepsilon}(T)\|^{2}_{L^{2}(\mathbb{T}^d)}
	+\f12\|h^{\varepsilon}(T)\|^{2}_{L^{2}(\mathbb{T}^d)}
	-\f12\|u^{\varepsilon}(0)\|^{2}_{L^{2}(\mathbb{T}^d)}
	-\f12\|h^{\varepsilon}(0)\|^{2}_{L^{2}(\mathbb{T}^d)}\\	
	=&\int_{\mathbb{T}^d}
	\big((u\otimes u)^{\varepsilon}-
	(u^{\varepsilon}\otimes u^{\varepsilon})\big)\nabla u^{\varepsilon}dxdt-\int_{0}^{T}\int_{\mathbb{T}^d}
	\big((h\otimes h)^{\varepsilon}- (h^{\varepsilon}\otimes h^{\varepsilon})\big)\nabla u^{\varepsilon} dxdt\\
	&+\int_{0}^{T} \int_{\mathbb{T}^d}
	\big((u\otimes h)^{\varepsilon}-(u^{\varepsilon } \otimes h^{\varepsilon})\big)\nabla h^{\varepsilon}dxdt-\int_{0}^{T}\int_{\mathbb{T}^d}
	\big((h\otimes u)^{\varepsilon}-(h^{\varepsilon}\otimes u^{\varepsilon})\big)\nabla h^{\varepsilon}
	dxdt\\&-\int_0^T\int_{\mathbb{T}^{d}}\B[  (h\otimes h)^{\varepsilon}-(h^{\varepsilon}\otimes h^{\varepsilon}) \B]\nabla(\nabla\times h^{\varepsilon})dxdt\\
	=&I+II_1+III+IV_1+V.
	\end{aligned}
	\end{equation}
Now, to obtain the desired energy conservation, we are in a position to prove the terms on the right-hand side of \eqref{c27} tend to zero as $\varepsilon\to 0$. However, since the last term $V$ is same as that in the proof of Theorem \ref{the1.1}, here we oly focus on the rest terms.
Actually,  for the term $I$, by virtue of the H\"older inequality, we can get
\be\ba\label{3.4}
 |I|
 \leq& C \|(u^{\varepsilon}\otimes u^{\varepsilon}) -(u\otimes u)^{\varepsilon}\|_{L^{\f32}(0,T;L^{\f32}(\mathbb{T}^d))}
 \|\nabla u^{\varepsilon}\|_{L^{3}(0,T;L^{3}(\mathbb{T}^d))}.
 \ea\ee
On the other hand, applying  Lemma \ref{lem2.3} and Lemma \ref{lem2.1} with $u\in L^{3}(0,T;B^{\f13}_{3,VMO}(\mathbb{T}^d))$, we discover
 $$\|(u^{\varepsilon}\otimes u^{\varepsilon}) -(u\otimes u)^{\varepsilon}\|_{L^{\f32}(0,T;L^{\f32}(\mathbb{T}^d))}\leq o(\varepsilon^{\f{2}{3}}),$$
and
 $$\|\nabla u^{\varepsilon}\|_{L^{3}(0,T;L^{3}(\mathbb{T}^d))}\leq o(\varepsilon^{-\f{2}{3}}),$$
which together with \eqref{3.4} imply
\be\label{3.5}
|I|\leq o(1),\ \text{as}\ \varepsilon\to 0.
\ee
 To control the terms $II_1$, $III$ and $IV_1$, in light of the H\"older inequality, we have
\begin{equation}\label{c28}\ba
&|II_1+ III+IV_1|\\
\leq  &\int_{0}^{T}\int_{\mathbb{T}^d}
  |\big((h\otimes h)^{\varepsilon}- (h^{\varepsilon}\otimes h^{\varepsilon})\big)||\nabla u^{\varepsilon}| dxdt+ \int_{0}^{T}\int_{\mathbb{T}^d}
|\big((h\otimes u)^{\varepsilon}-(h^{\varepsilon}\otimes u^{\varepsilon})\big)||\nabla h^{\varepsilon}|
dxdt\\
\leq &C\| (u^{\varepsilon }\otimes h^{\varepsilon}) -
(u\otimes h)^{\varepsilon} \|_{L^{\f{3}{2}}(0,T;L^{\f{3}{2}}(\mathbb{T}^d))}
\|\nabla h^{\varepsilon}\|_{L^{3}(0,T;L^{3}(\mathbb{T}^d))}\\
\leq &C\|(h\otimes h)^{\varepsilon}- (h^{\varepsilon}\otimes h^{\varepsilon})\|_{L^{\f{3}{2}}(0,T;L^{\f{3}{2}}(\mathbb{T}^d))}
 \|\nabla u^{\varepsilon}\|_{L^{3}(0,T;L^{3}(\mathbb{T}^d))}\\&+C\|(h\otimes u)^{\varepsilon}-(h^{\varepsilon}\otimes u^{\varepsilon})\|_{L^{\f{3}{2}}(0,T;L^{\f{3}{2}}(\mathbb{T}^d))}
 \|\nabla h^{\varepsilon}\|_{L^{3}(0,T;L^{3}(\mathbb{T}^d))},
\ea\end{equation}
Then due to $h\in L^{3}(0,T;\underline{B}^{\f23}_{3,VMO}(\mathbb{T}^d))$ and $u\in L^{3}(0,T;\underline{B}^{\f13}_{3,VMO}(\mathbb{T}^d))$, by virtue of  (1) in Lemma \ref{lem2.3}, as $\varepsilon\to 0$, we remark that
$$\| (h^{\varepsilon }\otimes h^{\varepsilon}) -
(h\otimes h)^{\varepsilon} \|_{L^{\f{3}{2}}(0,T;L^{\f{3}{2}}(\mathbb{T}^d))}\leq o(\varepsilon^{ \f43 }),$$
$$\| (h^{\varepsilon }\otimes u^{\varepsilon}) -
(h\otimes u)^{\varepsilon} \|_{L^{\f{3}{2}}(0,T;L^{\f{3}{2}}(\mathbb{T}^d))}\leq o(\varepsilon^{ 1 }),$$
$$\| (u^{\varepsilon }\otimes h^{\varepsilon}) -
(u\otimes h)^{\varepsilon} \|_{L^{\f{3}{2}}(0,T;L^{\f{3}{2}}(\mathbb{T}^d))}\leq o(\varepsilon^{ 1 }).$$
And it follows from Lemma
\ref{lem2.1} that
$$\|\nabla h^{\varepsilon}\|_{L^{3}(0,T;L^{3}(\mathbb{T}^d))}\leq o(\varepsilon^{-\f13}),\ \text{and}\ \|\nabla u^{\varepsilon}\|_{L^{3}(0,T;L^{3}(\mathbb{T}^d))}\leq o(\varepsilon^{-\f23}).$$
Thus, we end up with
$$ |II_1+ III+IV_1|\leq o(\varepsilon^{\f23}).$$
Recalling
\eqref{3.3}, we have
$$
|V|\leq o(1). $$
With these estimates in hand, by taking $\varepsilon\rightarrow0$ in \eqref{4.2}, we can achieve the  proof of  Theorem \ref{energythe1.1}.

\end{proof}
Next, we consider the magnetic helicity conservation for the inviscid H-MHD equations \eqref{hallMHD}.
\begin{proof}[Proof of Theorem \ref{mhthe1.3}]
Mollifying the equations  \eqref{dengjiaE-MHD1} and \eqref{12} in spatial direction, we
have
$$\ba
&h_{t}^{\varepsilon} +\nabla\times( h\times u)^{\varepsilon} +\nabla\times\B[\text{div}(h\otimes h)\B]^{\varepsilon} =0,\\
 &H_{t}^{\varepsilon}  + ( h\times u)^{\varepsilon} +\text{div}(h\otimes h)^{\varepsilon}   +\nabla p^{\varepsilon}   =0.
 \ea$$
By a straightforward computation, it gives
\begin{equation}\label{4.5}\ba
&\f{d}{dt}\int_{\mathbb{T}^3}h^{\varepsilon}\cdot H^{\varepsilon}dx=
\int_{\mathbb{T}^3}h_{t}^{\varepsilon}\cdot H^{\varepsilon}dx+
\int_{\mathbb{T}^3}h^{\varepsilon}\cdot H_{t}^{\varepsilon}dx\\
=&-\int_{\mathbb{T}^3} \B[\nabla\times( h\times u)^{\varepsilon} +\nabla\times\B[\text{div}(h\otimes h)\B]^{\varepsilon} \B]\cdot H^{\varepsilon}dx-
\int_{\mathbb{T}^3}h^{\varepsilon}\cdot \B[( h\times u)^{\varepsilon}  +\text{div}(h\otimes h)^{\varepsilon}   +\nabla p^{\varepsilon}\B]dx\\
=&-\int_{\mathbb{T}^3}\B[(h\times u)^\varepsilon+\Div (h\otimes h)^\varepsilon\B]\cdot h^\varepsilon dx-\int_{\mathbb{T}^3}h^\varepsilon\cdot \B[(h\times u)^\varepsilon+\Div (h\otimes h)^\varepsilon\B]dx\\
=&-2\int_{\mathbb{T}^3}(h\times u)^\varepsilon \cdot h^\varepsilon dx-2\int_{\mathbb{T}^3} \Div (h\otimes h)^\varepsilon \cdot h^\varepsilon dx,
\ea\end{equation}
where the integration by parts and the divergence-free condition have been used.
Then we conclude by performing a time integral   that
\begin{equation}\label{c30}\ba
&\int_{\mathbb{T}^3} h ^{\varepsilon}(x,T)\cdot B^{\varepsilon}(x,T) dx -\int_{\mathbb{T}^3} h ^{\varepsilon}(x,0)\cdot B^{\varepsilon}(x,0) dx\\=&-2\int_{0}^{T}\int_{\mathbb{T}^3}(h\times u)^{\varepsilon}\cdot  h^{\varepsilon}dxdt-2\int_{0}^{T}\int_{\mathbb{T}^3}\text{div}(h\otimes h)^{\varepsilon}\cdot h^{\varepsilon}dxdt\\
=&-2\int_{0}^{T}\int_{\mathbb{T}^3}\B[(h\times u)^{\varepsilon}-(h^{\varepsilon}\times u^{\varepsilon})\B] h^{\varepsilon}dxdt+2\int_0^T\int_{\mathbb{T}^3} \B[(h\otimes h)^\varepsilon-h^\varepsilon\otimes h^\varepsilon\B]\nabla h^\varepsilon dxdt\\
=&I+II,
\ea\end{equation}
 where we have used the divergence-free condition and the fact that $h^{\varepsilon}\cdot(u^{\varepsilon}\times h^{\varepsilon})=u^\varepsilon\cdot(h^\varepsilon\times h^\varepsilon)=0$.

 Now, to obtain the desired the magnetic helicity conservation, we need to take the limits in $\varepsilon\to 0$ in the above equation. Precisely, we have to show that the terms $I$, $II$ tend to zero as $\varepsilon\to 0$. Since the second term $II$ is exactly the same as \eqref{03.6} in proof of Theorem \ref{the1.2}, hence, we here only need to focus on the term $I$. Actually,
applying Lemma \ref{lem2.4} with  $u\in L^{3} (0,T;L^{3}(\mathbb{T}^{3}))$ and $ h\in L^{3}(0,T;\underline{B}^{\f13}_{3,VMO}(\mathbb{T}^3)),$ we see that $I$ tends to zero as $\varepsilon\rightarrow0.$
By arguing as what was done to prove \eqref{0.37}, we have
 $$
|II|\leq o(1).
$$
Thus, the proof of this theorem is completed.
 \end{proof}
 \section{Refined sufficient conditions of conserved quantities  in the  viscous E-MHD and H-MHD equations}
 In this section, making use of the energy  $L^{\infty}(0,T;L^{2}(\mathbb{T}^3))\cap L^{2}(0,T;H^{1}(\mathbb{T}^3)))$, we will reduce the proof of  Corollary \ref{coro1.5}-\ref{coro1.6} to the results in Theorem \ref{energythe1.1}-\ref{mhthe1.3}.
 \begin{proof}[Proof of Corollary \ref{coro1.6}]
	First, Notice that for any weak solutions of the system \eqref{visE-MHD}, there holds that $h   \in L^{\infty}(0,T;L^{2}(\mathbb{T}^3))\cap L^{2}(0,T;H^{1}(\mathbb{T}^3))$.

1.(Energy conservation) Multiplying the viscid E-MHD equations \eqref{visE-MHD} by $(h^\varepsilon)^\varepsilon$ and integrating the resualant over time-space, we have
\be\ba\label{5.1}
&\int_0^T\int_{\mathbb{T}^{d}}\f{1}{2}\f{d}{dt}|h^\varepsilon|^2dxdt+\int_0^T\int_{\mathbb{T}^d}|\nabla h^\varepsilon|^2dxdt+\int_0^T\int_{\mathbb{T}^{d}}\nabla\times\B[(\nabla\times h)\times h\B]^{\varepsilon}h^{\varepsilon}dxdt=0.
\ea\ee	
Recalling \eqref{c9} and $\vec{j}=\nabla \times h$, we have
\begin{equation}\label{5.2}\ba
	\int_0^T\int_{\mathbb{T}^{d}}\nabla\times\B[(\nabla\times h)\times h\B]^{\varepsilon}h^{\varepsilon}dxdt
	&=\int_0^T\int_{\mathbb{T}^{d}}\B[(\vec{j}\times h)^{\varepsilon}-(\vec{j}^{\varepsilon}\times h^{\varepsilon})\B]\nabla\times h^{\varepsilon}dxdt\\
	&=\int_0^T\int_{\mathbb{T}^{d}}\B[(\vec{j}\times h)^{\varepsilon}-(\vec{j}^{\varepsilon}\times h^{\varepsilon})\B]\vec{j}^{\varepsilon}dxdt
	\ea
\end{equation}
To control this term, first in light of H\"older inequality, we see that
\begin{equation}\label{5.3}
	\begin{aligned}
		|\int_0^T\int_{\mathbb{T}^{d}}\nabla\times\B[(\nabla\times h)\times h\B]^{\varepsilon}h^{\varepsilon}dxdt|\leq &	\|(\vec{j}\times h)^{\varepsilon}-(\vec{j}^{\varepsilon}\times h^{\varepsilon})\|_{L^{\f{r_1 r_2}{r_1+r_2}}(0,T;L^{\f{pq}{p+q}}(\mathbb{T}^d))}\|\vec{j}^\varepsilon\|_{L^{r_1}(0,T;L^p(\mathbb{T}^d))},
	\end{aligned}
\end{equation}
where $\f{2}{r_1}+\f{1}{r_2}=1$ and $\f{2}{p}+\f{1}{q}=1$.
Moreover, since $\vec{j}\in L^{r_1}(0,T;L^p(\mathbb{T}^d))$ and $h\in L^{r_2}(0,T;L^q(\mathbb{T}^d))$, then it follows from Lemma \ref{lem2.4} that
\begin{equation}\label{5.4}
	\|(\vec{j}\times h)^{\varepsilon}-(\vec{j}^{\varepsilon}\times h^{\varepsilon})\|_{L^{\f{r_1 r_2}{r_1+r_2}}(0,T;L^{\f{pq}{p+q}}(\mathbb{T}^d))}\to 0, \text{as}\ \varepsilon\to 0,
\end{equation}
which immediately implies
\begin{equation}
	\int_0^T\int_{\mathbb{T}^{d}}\nabla\times\B[(\nabla\times h)\times h\B]^{\varepsilon}h^{\varepsilon}dxdt\to 0, \text{as}\ \varepsilon\to 0.
\end{equation}
Hence, taking the limits in \eqref{5.1} as $\varepsilon\to 0$, we can conclude the desired energy conservation.

2.(Magnetic helicity conservation) First, recalling the indentity $\Delta h=\nabla (\Div h)-\nabla \times (\nabla \times h)$, we can get the magnetic vector potential equation for the viscous E-MHD equations \eqref{visE-MHD} as follows
\begin{equation}\label{5.6}
	H_t+(\nabla \times h)\times h+(\nabla \times h)+\nabla p=0,\ \ \Div H=0,
\end{equation}
where $H=\text{curl}^{-1}$. Then multiplying \eqref{visE-MHD} by $(H^\varepsilon)^\varepsilon$ and \eqref{5.6} by $(h^\varepsilon)^\varepsilon$, respectively, we have
\begin{equation}\label{5.7}
	\begin{aligned}
&	\f{d}{dt}\int_{\mathbb{T}^3}h^{\varepsilon}\cdot H^{\varepsilon}dx=
	\int_{\mathbb{T}^3}h_{t}^{\varepsilon}\cdot H^{\varepsilon}dx+
	\int_{\mathbb{T}^3}h^{\varepsilon}\cdot H_{t}^{\varepsilon}dx\\
	=&-\int_{\mathbb{T}^3}  \B[\nabla\times\B((\nabla \times h)\times h\B)^{\varepsilon}+\nabla \times (\nabla \times h)^\varepsilon\B] \cdot H^{\varepsilon}dx -
	\int_{\mathbb{T}^3}h^{\varepsilon}\cdot \B[\B( (\nabla \times h)\times h\B)^\varepsilon +\nabla \times h^\varepsilon  +\nabla p^{\varepsilon}\B]dx\\
	=&-2\int_{\mathbb{T}^3}\B[(\nabla \times h)\times h\B]^{\varepsilon}h^{\varepsilon}dx-2\int_{\mathbb{T}^3} (\nabla \times h)^\varepsilon \cdot h^\varepsilon dx\\
	=&-2\int_{\mathbb{T}^3}\B[\text{div}(h\otimes h)^{\varepsilon}-\Div(h^\varepsilon\otimes h^\varepsilon)\B]h^{\varepsilon}dx-2\int_{\mathbb{T}^3} (\nabla \times h)^\varepsilon \cdot h^\varepsilon dx\\
	=&2\int_{\mathbb{T}^3}\B[(h\otimes h)^{\varepsilon}-(h^\varepsilon\otimes h^\varepsilon)\B]\nabla h^{\varepsilon}dx-2\int_{\mathbb{T}^3} (\nabla \times h)^\varepsilon \cdot h^\varepsilon dx,	
		\end{aligned}
\end{equation}
where we have used \eqref{non2} and the divergence-free condition. Hence, integrating \eqref{5.7} over time-space, we can obtain
\begin{equation}\label{5.8}
	\begin{aligned}
	\int_0^T\int_{\mathbb{T}^3} \f{d}{dt} h^\varepsilon\cdot H^\varepsilon dxdt +2\int_0^T\int _{\mathbb{T}^3} (\nabla \times h)^\varepsilon\cdot h^\varepsilon dxdt=2\int_0^T\int_{\mathbb{T}^3} \B[(h\otimes h)^{\varepsilon}-(h^\varepsilon\otimes h^\varepsilon)\B]\nabla h^{\varepsilon}dxdt.	
	\end{aligned}
\end{equation}	
Now, to obtain the desired magnetic helicity conservation, it suffices to show that the term on the right-hand side of \eqref{5.8} tends to zero as $\varepsilon\to 0$. For the convenience of computation, we let this term by $R$.

(1) To control the term $R$, it follows from the H\"older inequality that
\begin{equation}\label{c19}
	\begin{aligned}
		|R|=& \B|2\int_{0}^{T}\int_{\mathbb{T}^3}\B[ (h\otimes h)^{\varepsilon}-h^{\varepsilon}\otimes h^{\varepsilon}\B]\nabla h^{\varepsilon}dxdt\B|	\\
		\leq&C\|(h\otimes h)^{\varepsilon}-h^{\varepsilon}\otimes h^{\varepsilon}\|_{L^{\f{2p}{p+1}}(0,T; L^{\f{2q}{q+1}}(\mathbb{T}^{3}))} \|\nabla h^{\varepsilon}\|_{L^{\f{2p}{p-1}}(0,T; L^{\f{2q}{q-1}}(\mathbb{T}^{3}))}
	\end{aligned}
\end{equation}
Since $\vec{j}\in L^{p}(0,T;L^{q}(\mathbb{T}^{3}))$ and $ h\in L^{\f{2p}{p-1}}(0,T;L^{\f{2q}{q-1}}(\mathbb{T}^{3})),$ making use of
Lemma \eqref{lem2.6}, we have
\begin{equation}\label{c20} \begin{aligned}
		&\|(h\otimes h)^{\varepsilon}-h^{\varepsilon}\otimes h^{\varepsilon} \|_{L^{\f{2p}{p+1}}(0,T;L^{\f{2q}{q+1}}(\mathbb{T}^{3}))}\leq C\varepsilon\|h\|_{L^p(W^{1,q}(0,T;\mathbb{T}^{3}))} \| h\|_{L^{\frac{2p}{p-1}}(L^{\frac{2q}{q-1}}(0,T;\mathbb{T}^{3}))},\\
		&\B\|\nabla h^\varepsilon\B\|_{L^{\f{2p}{p-1}}(0,T;L^{\f{2q}{q-1}}(\mathbb{T}^{3}))}\leq C\varepsilon^{-1}\|h\|_{L^{\f{2p}{p-1}}(0,T;L^{\f{2q}{q-1}}(\mathbb{T}^{3}))},\\ &\limsup_{\varepsilon\rightarrow0}\varepsilon\B\|\nabla h\B\|_{L^{\f{2p}{p-1}}(0,T;L^{\f{2q}{q-1}}(\mathbb{T}^{3}))}=0.
\end{aligned}\end{equation}
This  implies that
\begin{equation}\label{c21}
	\limsup_{\varepsilon\rightarrow0}
	\B|2\int_{0}^{T}\int_{\mathbb{T}^3}\B[ (h\otimes h)^{\varepsilon}-h^{\varepsilon}\otimes h^{\varepsilon}\B]\nabla h^{\varepsilon}dxdt\B|=0.\end{equation}
Thus, taking the limits in \eqref{03.6} as  $\varepsilon\rightarrow0$  and using the Lebesgue dominated convergence theorem and  \eqref{c21},  we complete the proof of this part.

(2) We derive from  $\vec{j}\in L^{p}(0,T;L^{q}(\mathbb{T}^{3}))$  that
	$\nabla h\in L^{p}(0,T;L^{q}(\mathbb{T}^{3}))$  via  the classical Calder\'on-Zygmund Theorem.
	A natural choice in the first part of Corollary \ref{coro1.6}  is $p=q=2$, hence, we know that $h\in L^{4}(0,T;L^{4}(\mathbb{T}^3))$ guarantee magnetic   helicity relation  \eqref{mhi}. Then, we have completed the second part of this Corollary.
	
	(3) In light of  the interpolation inequality, we have
	\begin{equation}
		\begin{aligned}
			\|h\|_{L^{4}(0,T;L^{4}(\mathbb{T}^3) )}  \leq& C\|h\|_{L^{\infty}\left(0,T;L^{2}(\mathbb{T}^{3})\right)}^{\frac{(q-4)}{2 q-4}}\|h\|_{L^{p}(0,T;L^q(\mathbb{T}^3))}^{\frac{q}{2q-4}}\leq C.
		\end{aligned}
	\end{equation}
	where $\frac{2}{p}+\frac{2}{q}=1$ and $q\geq 4$ was used.

	Likewise, we also have
	\begin{equation}
		\begin{aligned}
			\|h\|_{L^{4}(0,T;L^{4}(\mathbb{T}^3))}
			&\leq C\|h\|_{L^2(0,T;L^6(\mathbb{T}^3))}^{\frac{3(4-q)}{2(6-q)}}\|h\|_{L^p(0,T;L^q(\mathbb{T}^3))}^{\frac{q}{2(6-q)}}\\
			&\leq C\left(\|\nabla h\|_{L^2(0,T;L^2(\mathbb{T}^3))}+\|h\|_{L^\infty(0,T;L^2(\mathbb{T}^3))}\right)^{\frac{3(4-q)}{2(6-q)}}\|h\|_{L^p(0,T;L^q(\mathbb{T}^3))}^{\frac{q}{2(6-q)}}\leq C.
		\end{aligned}
	\end{equation}
	where $3<q<4$ and $\frac{1}{p}+\frac{3}{q}=1$ was used.
	
	Hence, based on the result in part (2), we conclude the  third part of this Corollary.
	
	(4) For  $\frac{1}{p}+\frac{6}{5 q}=1$ and $q\geq \frac{9}{5}$, we deduce from  the Gagliardo-Nirenberg inequality that
	\begin{equation}\label{c205.3}
		\begin{aligned}
			\|h\|_{L^{\f{2p}{p-1}}(0,T;L^{\f{2q}{q-1}}(\mathbb{T}^3))}\leq &C \|h\|_{L^{\infty}(0,T;L^{2}(\mathbb{T}^3))}^{\f{5q-9}{5q-6}}\|\nabla h\|_{L^{p}(0,T;L^{q}(\mathbb{T}^3))}^{\frac{3}{5q-6}}\\
			\leq &C \|h\|_{L^{\infty}(0,T;L^{2}(\mathbb{T}^3))}^{\f{5q-9}{5q-6}}\|\vec{j}\|_{L^{p}(0,T;L^{q}(\mathbb{T}^3))}^{\frac{3}{5q-6}}\leq C.
	\end{aligned}\end{equation}
	For $\frac{3}{2}<q<\frac{9}{5}$ and $\frac{1}{p}+\frac{3}{q}=2$, using the Gagliardo-Nirenberg inequality once again, we find
	\begin{equation}
		\begin{aligned}
			\|h\|_{L^{\frac{2p}{p-1}}(0,T;L^{\frac{2q}{q-1}}(\mathbb{T}^3))}&\leq C\|h\|_{L^2(0,T;L^6(\mathbb{T}^3))}^{\frac{9-5q}{6-3q}}\|\nabla h\|_{L^p(0,T;L^q(\mathbb{T}^3))}^{\frac{2q-3}{6-3q}}\\
			&\leq \left(\|\nabla h\|_{L^2(0,T;L^2(\mathbb{T}^3))}+\|h\|_{L^\infty(0,T;L^2(\mathbb{T}^3))}\right)^{\frac{9-5q}{6-3q}}\|\nabla h\|_{L^p(0,T;L^q(\mathbb{T}^3))}^{\frac{2q-3}{6-3q}}\\
			&\leq \left(\|\vec{j}\|_{L^2(0,T;L^2(\mathbb{T}^3))}+\|h\|_{L^\infty(0,T;L^2(\mathbb{T}^3))}\right)^{\frac{9-5q}{6-3q}}\|\vec{j}\|_{L^p(0,T;L^q(\mathbb{T}^3))}^{\frac{2q-3}{6-3q}}\leq C.
		\end{aligned}
	\end{equation}
	Then we conclude the desired result from the first part of this Corollary \ref{coro1.6}. Then the proof of this corollary is completed.
\end{proof}

 \begin{proof}[Proof of Corollary \ref{coro1.5}]
 (1) We derive from the $u\in L^{2}(0,T;H^{1}(\mathbb{T}^3))$ and Sobolev embedding theorem that $u\in L^{2}(0,T;H^{\f56}(\mathbb{T}^3))$. Since $H^{\f56}(\mathbb{T}^3)\approx B^{\f56}_{2,2}$, we conclude  by  $B^{\f56}_{2,2}\subseteq B^{\f13}_{3,2}$ that $u\in L^{2}(0,T; B^{\f13}_{3,2})$. This together with the Dominated-Convergence-Theorem and $ u\in L^{3}(0,T;B_{3,\infty}^{\f13})$
means that $ u\in L^{3}(0,T;B_{3,c(\mathbb{N})}^{\f13})$. This further enables us to get $u\in L^{3}(0,T;\underline{B}^{\f13}_{3,VMO}(\mathbb{T}^d))$. Theorem \ref{energythe1.1} helps us to complete the proof of this part.

(2) Exactly as in the above derivation, we deduce from $h\in L^{3}(0,T;B^{\f13}_{3,\infty}(\mathbb{T}^3))$ that $h\in L^{3}(0,T;\underline{B}^{\f13}_{3,VMO}(\mathbb{T}^d))$. The interpolation inequality and $u\in L^{\infty}(0,T;L^{2}(\mathbb{T}^3))\cap L^{2}(0,T;H^{1}(\mathbb{T}^3))$ leads to $u\in L^{3} (0,T;L^{3}(\mathbb{T}^{3}))$. With this in hand, we immediately finish the proof of this part by Theorem \ref{mhthe1.3}.
 \end{proof}

\section*{Acknowledgements}

 Wang was partially supported by  the National Natural
 Science Foundation of China under grant (No. 11971446, No. 12071113   and  No.  11601492)   and
sponsored by Natural Science Foundation of Henan Province (No. 232300421077).  Ye was partially supported by the National Natural Science Foundation of China  under grant (No.11701145) and sponsored by Natural
Science Foundation of Henan Province (No.
232300420111).

{\bf Author Declarations statement}

The authors have no conflicts to disclose.

{\bf Data Availability}

This publication is supported by multiple datasets, which are openly available at locations cited in the reference section.

\end{document}